\documentclass[11pt,a4paper]{article}
\usepackage[utf8]{inputenc}
\usepackage[T1]{fontenc}
\usepackage{amsfonts,amssymb,amsmath,amsthm}
\usepackage[margin=2.5cm]{geometry}
\usepackage{hyperref}
\hypersetup{hidelinks}

\newcommand{\R}{\mathbb{R}}
\newcommand{\Rn}{\mathbb{R}^n}

\newcommand{\Om}{\Omega}

\newcommand{\test}{C_c^1}
\newcommand{\prt}{\partial}

\newcommand{\dd}{\mathrm{d}}
\newcommand{\Ln}{\mathcal{L}^n}
\newcommand{\Hnmo}{\mathcal{H}^{n-1}}
\newcommand{\e}{\varepsilon}

\newcommand{\al}{\alpha}
\DeclareMathOperator{\Div}{div}
\DeclareMathOperator{\diam}{diam}
\newcommand{\sgn}{\operatorname{sgn}}

\newcommand{\uw}{{\mathcal{W}}}

\newcommand{\Fo}{F_{\circ}}
\def\aver{\mathop{\,\rlap{---}\!\int}\nolimits}

\theoremstyle{plain}
\newtheorem{theorem}{Theorem}[section]
\newtheorem{proposition}[theorem]{Proposition}
\newtheorem{lemma}[theorem]{Lemma}

\theoremstyle{definition}

\theoremstyle{remark}

\numberwithin{equation}{section}

\title{\bf Optimal Non-Homogeneous Improvements of the Anisotropic Hardy Inequality}
\author{Giuseppina di Blasio \and Giovanni Pisante \and Georgios Psaradakis}
\date{}

\begin{document}
\maketitle

\begin{abstract}
\noindent We consider the Hardy inequality associated with a strongly convex Finsler
norm $F$ on $\Rn$ and we study its optimal non-homogeneous improvements in the whole
range $1<p<\infty$. We show that the anisotropic Hardy difference controls a weighted
critical Sobolev norm when $1<p<n$, a weighted exponential integrability of Trudinger
type when $p=n$, where the Hardy term is replaced by its Leray logarithmic correction,
and a weighted H\"older seminorm when $p>n$, proving in each regime that the
logarithmic weights appear with the optimal exponents. In the borderline case $p=n$
this extends to general functions the optimal exponent previously known only for
anisotropically radial functions. The proofs rely on a common ground-state reduction,
combined with a splitting into an $\Fo$-radial and a mean-free part for $1<p\le n$, and
with an anisotropic Morrey representation formula for $p>n$.
\end{abstract}

\medskip
\noindent\textbf{2020 Mathematics Subject Classification.}
Primary 46E35, 26D10; Secondary 35A23, 46E30, 52A40.

\smallskip
\noindent\textbf{Key words and phrases.}
Anisotropic Hardy inequality; Finsler norm; Hardy--Sobolev inequality;
Leray--Trudinger inequality; Hardy--Morrey inequality; optimal logarithmic weights.

\section{Introduction}

In this paper we deal with the Hardy inequality associated with a general norm $F$ of
$\Rn$ and we study its optimal non-homogeneous improvements in the whole range
$1<p<\infty$. Throughout the paper, $F:\Rn\to[0,\infty)$, $n\ge2$, is an even,
positively $1$-homogeneous and strongly convex norm (see \S\ref{ssec:finsler} for the
precise assumptions), and $\Fo$ denotes its polar norm. When $F$ is the Euclidean norm
we recover the classical setting.

Let $\Om\subset\Rn$ be a bounded domain containing the origin and set
$R_\Om:=\sup_{x\in\Om}\Fo(x)$. The anisotropic $L^p$-Hardy inequality asserts that
\begin{equation}\label{anisoHardyintrod}
 I_{p,F}[u]:=\int_\Om F^p(\nabla u)\,\dd x
 -\Big|\frac{n-p}{p}\Big|^p\int_\Om\frac{|u|^p}{\Fo^{\,p}}\,\dd x\ \ge\ 0,
\end{equation}
for every $u\in\test(\Om)$ if $1<p<n$, and for every $u\in\test(\Om\setminus\{0\})$ if
$p>n$; the constant $|(n-p)/p|^p$ is sharp and it is not attained. In the anisotropic setting, for $1<p<n$ inequality \eqref{anisoHardyintrod} can be deduced from the P\'olya--Szeg\H{o}
principle for convex symmetrization \cite{AFLT} (see also \cite{VS}), while a sharp form
valid for every $p\neq n$, with the smaller directional integrand
$|\nabla u\cdot x/\Fo|^p$ in place of $F^p(\nabla u)$, is due to Mercaldo, Sano and
Takahashi \cite[Theorem~1.1]{MST}; see also \cite{BlPPs}, and \cite{PBG} for Hardy
inequalities involving the anisotropic distance to the boundary.

In the borderline case $p=n$ the Hardy constant vanishes, and the inequality has to be
modified. Already in 1933, Leray \cite{Le} proved in the plane a substitute inequality,
in which the Hardy term carries a logarithmic correction; its multidimensional version,
with best constant $((n-1)/n)^n$ and optimal power of the logarithmic weight, is
contained in \cite{BFT1}. Setting
\[
 X_1(t):=(1-\log t)^{-1}\quad (t\in(0,1]),\qquad X_1(0):=0,
\]
the anisotropic counterpart reads
\begin{equation}\label{anisoLeray}
 I_{n,F}[u]:=\int_\Om F^n(\nabla u)\,\dd x
 -\Big(\frac{n-1}{n}\Big)^{\!n}\int_\Om\frac{|u|^n}{\Fo^{\,n}}\,
 X_1^n\Big(\frac{\Fo}{R_\Om}\Big)\,\dd x\ \ge\ 0,
 \qquad u\in\test(\Om),
\end{equation}
and it follows from \cite[Theorem~1.2]{MST}, where the larger weight
$\log^{-n}(R_\Om/\Fo)$ appears in place of $X_1^n(\Fo/R_\Om)$.

It is worth noticing that the three forms of the Hardy inequality mirror the three
regimes of the Sobolev embedding of $W^{1,p}_0(\Om)$. For $1<p<n$ a point has zero
$p$-capacity, $W^{1,p}_0(\Om)$ embeds into $L^{p^\ast}(\Om)$, $p^\ast=np/(n-p)$, and the
singularity of the Hardy weight at the origin is harmless, so that
\eqref{anisoHardyintrod} holds on the whole $\test(\Om)$. At $p=n$ the Sobolev exponent
blows up and is replaced by the exponential integrability of Trudinger \cite{Tr} (see
\cite{M} for the sharp constant); at the same time the Hardy
constant degenerates, and the logarithmic correction of Leray appears. Finally, for $p>n$
a point has positive $p$-capacity and $W^{1,p}$-functions are H\"older continuous with
exponent $1-n/p$ by Morrey's inequality; accordingly, \eqref{anisoHardyintrod} is
meaningful only for functions vanishing at the origin.
The same three regimes occur for
the anisotropic Sobolev \cite{AFLT,BlPPs}, Trudinger--Moser \cite{ZhZh} and Morrey
\cite{KMM} inequalities.

Since the best constant in \eqref{anisoHardyintrod} is not attained, one is naturally
led to ask which kind of positive remainder is controlled by the Hardy difference
$I_{p,F}[u]$. The study of such improved Hardy inequalities goes back to the
contribution of Brezis and V\'azquez \cite{BV}, who proved that in a bounded domain the
Euclidean Hardy difference for $p=2$ controls a multiple of the $L^2$-norm, and has
since been widely investigated (see \cite{ACR,BFT1,FT,VzZ} and references therein).
Roughly speaking, two kinds of remainders can be considered. The \emph{homogeneous} ones
have the same structure as the Hardy term, and lead to the series expansion of the
Hardy inequality with iterated logarithmic weights \cite{BFT1,FT}. The
\emph{non-homogeneous} ones measure instead a gain of integrability or regularity of
$u$, and their form is dictated by the Sobolev embedding. In the Euclidean case the
optimal non-homogeneous improvements are known in all three regimes. For $p=2<n$,
Filippas and Tertikas \cite{FT} proved that the Hardy difference controls a critical
Sobolev norm with the optimal weight $X_1^{1+n/(n-2)}$, and Gkikas and Psaradakis
\cite{GkPs} extended this result to every $1<p<n$, the optimal weight being
$X_1^{1+p^\ast/p}$. For $p=n$, after the contributions of Psaradakis and Spector
\cite{PsSp} and of Mallick and Tintarev \cite{MT} (see also \cite{CR,CR2} for radial
functions), the optimal Leray--Trudinger inequality, with the weight $X_2^{1/n}$, where
$X_2:=X_1\circ X_1$, has been obtained in \cite{BlPPsIUMJ}. For $p>n$, Psaradakis \cite{Ps} proved the optimal
Hardy--Morrey inequality, in which the H\"older seminorm carries the weight
$X_1^{1/p}$; see also \cite{FPs,HLL} for the analogous problem with the distance to the
boundary, and \cite{Br,HL} for related results on Morrey's inequality.

In the anisotropic setting much less is known. A weighted anisotropic Sobolev
inequality, with applications to Hardy inequalities, has been established in
\cite{BlPPs}, and sharp anisotropic Caffarelli--Kohn--Nirenberg inequalities have been
studied in \cite{ShenCKN}. In \cite{BlPPsNA} we proved an anisotropic improved
Leray--Trudinger inequality: the optimal exponent $1/n$ on $X_2$ was reached for
$\Fo$-radial functions, whereas for general functions the method gave only the exponent
$2/n$. To our knowledge, the optimal non-homogeneous improvements of
\eqref{anisoHardyintrod} for $p\neq n$ are not available in the literature.

Our aim is to fill this gap, showing that all the Euclidean results can be extended to any strongly convex Finsler norm, with the same optimal weights in each of the three regimes. In the subcritical range our main result is the following.

\begin{theorem}\label{main}
Let \(n\ge2\), \(1<p<n\), and let \(\Om\subset\Rn\) be a bounded domain
containing the origin. There exists a positive constant \(C=C(n,p,F)\) such that for every
\(u\in\test(\Om)\),
\begin{equation}\label{HS}
 I_{p,F}[u]
 \ge
 C\Bigg(
 \int_\Om |u|^{p^\ast}
 X_1^{\,1+p^\ast/p}\!
 \Big(\frac{\Fo(x)}{R_\Om}\Big)\,\dd x
 \Bigg)^{p/p^\ast},
 \qquad
 p^\ast:=\frac{np}{n-p}.
\end{equation}
Moreover, the power \(1+p^\ast/p\) on \(X_1\) cannot be decreased.
\end{theorem}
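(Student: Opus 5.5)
We argue by the ground-state reduction announced in the abstract. Normalize $R_\Om=1$ by scaling, set $\gamma:=(n-p)/p$, and write $u=\Fo^{-\gamma}v$; $v$ is bounded near the origin only in a weak sense, but for $u\in\test(\Om)$ it is harmless. Using $F(\nabla\Fo)=1$ and $\nabla\Fo\cdot x$ relations from \S\ref{ssec:finsler} (in particular $\nabla_\xi F^p(\nabla \Fo)\cdot x=p\Fo$ type identities), the standard convexity inequality $F^p(a+b)\ge F^p(a)+\nabla_\xi F^p(a)\cdot b+c_pF^p(b)$ (valid for $p\ge2$, with a suitable modification $c_p F^2(b)(F(a)+F(b))^{p-2}$ for $1<p<2$, exploiting strong convexity of $F$) applied with $a=-\gamma v\Fo^{-\gamma-1}\nabla\Fo$, $b=\Fo^{-\gamma}\nabla v$, after integration by parts to cancel the cross term against the Hardy term, yields
\[
I_{p,F}[u]\ \ge\ c\int_\Om F^p(\nabla v)\,\Fo^{p-n}\,\dd x
\]
for $p\ge2$, and an analogous mixed quantity for $1<p<2$. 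So it suffices to prove a weighted Sobolev inequality $\int F^p(\nabla v)\Fo^{p-n}\ge C(\int |v|^{p^*}\Fo^{-n}X_1^{1+p^*/p}(\Fo))^{p/p^*}$.

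For this, decompose $v=\bar v+w$ where $\bar v(r)$ is the average of $v$ over the Wulff sphere $\{\Fo=r\}$ (with respect to the natural measure $\nabla \Fo$-flux, i.e.\ $|\nabla\Fo|^{-1}\dd\Hnmo$ or equivalently via anisotropic polar coordinates $x=r\theta$, $\Fo(\theta)=1$). For the radial part, the problem becomes one-dimensional: with $t=-\log r$, $\int F^p(\nabla\bar v)\Fo^{p-n}\approx\int_0^\infty|\bar v'(t)|^p\,\dd t$ and the right side becomes $\int_0^\infty|\bar v|^{p^*}(1+t)^{-1-p^*/p}\dd t$, with $\bar v(0)=0$ at $t=0$ (the boundary $r=1$). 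This 1D inequality follows from $|\bar v(t)|\le t^{1-1/p}\|\bar v'\|_p$ and integrability of $t^{p^*(p-1)/p}(1+t)^{-1-p^*/p}$ — which is where the exponent $1+p^*/p$ is exactly what's needed, since $p^*(p-1)/p-1-p^*/p = p^*(p-2)/p-1$... more carefully one uses a Bliss/Hardy-type 1D weighted inequality with the borderline weight; I would invoke the one-dimensional lemma of Gkikas--Psaradakis type. For the mean-free part $w$, on each dyadic annulus $A_k=\{2^{-k-1}<\Fo<2^{-k}\}$ the Sobolev–Poincaré inequality for functions with zero spherical mean (Wulff annuli are bi-Lipschitz to Euclidean ones, uniformly after scaling) gives $\|w\|_{L^{p^*}(A_k)}^p\le C\int_{A_k}F^p(\nabla w)$, with no logarithmic loss; the weight $\Fo^{p-n}$ and $\Fo^{-n}$ scale consistently ($p^*$-homogeneity), and summing over $k$ using $\ell^p\subset\ell^{p^*}$ gives the bound even with weight $1$ in place of $X_1$. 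Finally $\int F^p(\nabla\bar v)\Fo^{p-n}+\int F^p(\nabla w)\Fo^{p-n}\lesssim\int F^p(\nabla v)\Fo^{p-n}$ by Jensen on spheres (averaging commutes with radial derivative, and $|\nabla w|\le|\nabla v|+|\nabla\bar v|$).

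The main obstacle is the case $1<p<2$, where the remainder from the convexity inequality is not $\int F^p(\nabla v)\Fo^{p-n}$ but a mixed quantity; I would handle it as in the Euclidean proof of Gkikas--Psaradakis, bounding $\int |v|^{p^*}\ldots$ via Hölder by the mixed term times powers of $\int |v|^p \Fo^{-n}X_1^{p}$-like terms, which are controlled by $I_{p,F}$ through the homogeneous improvement (the anisotropic analogue of the $X_1^p$ remainder, proved by the same 1D reduction). For optimality, take $u_\e=\Fo^{-\gamma}X_1^{-(1/p-\e)}$-type functions cut off near $\partial\Om$ (e.g.\ $\Fo^{-\gamma}X_1^{\e-1/p}(\Fo)\,\phi$): a direct computation in anisotropic polar coordinates shows $I_{p,F}[u_\e]$ stays bounded relative to $\e^{-1}$ growth while $\int|u_\e|^{p^*}X_1^{1+p^*/p-\delta}$ blows up faster, contradicting \eqref{HS} with a smaller power as $\e\to0$.
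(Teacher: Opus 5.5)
\textbf{There is a genuine gap for $2<p<n$} (a nonempty range once $n\ge3$). For $p\ge2$ you keep only the first inequality in \eqref{convex}, i.e.\ the energy $\int\Fo^{\,p-n}F^p(\nabla v)\,\dd x$, and claim it suffices to prove
\[
 \int_\Om F^p(\nabla v)\,\Fo^{\,p-n}\,\dd x\ \ge\ C\Big(\int_\Om|v|^{p^*}\Fo^{-n}X_1^{1+p^*/p}(\Fo)\,\dd x\Big)^{p/p^*}.
\]
This inequality is false when $p>2$, already for $\Fo$-radial $v$. In your variable $t=-\log r$ it reads $\int_0^\infty|g'|^p\,\dd t\ge c\big(\int_0^\infty|g|^{p^*}(1+t)^{-1-p^*/p}\,\dd t\big)^{p/p^*}$ with $g(0)=0$. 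Take $g_T=\min\{1,t/T\}$ on $[0,2T]$, decreasing linearly to $0$ on $[2T,3T]$ and vanishing afterwards, so that $u=\Fo^{-(n-p)/p}g_T(-\log\Fo)$ is Lipschitz with compact support in $\uw\setminus\{0\}$. The left side equals $2T^{1-p}$. The integral over $[T,2T]$ alone makes the right side at least $c(1+T)^{-1}$. The ratio is therefore $O(T^{2-p})\to0$.

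Equivalently, the Bradley--Muckenhoupt constant of this one-dimensional problem, $\sup_{s>0}s^{1-1/p}(1+s)^{-1/p}$, is infinite exactly when $p>2$. So no Bliss- or Hardy-type one-dimensional lemma can close this step, and your own exponent count $p^*(p-2)/p-1\ge-1$ is the symptom. The reduced energy alone does not control the $\Fo$-spherical mean of $v$ when $p>2$.

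The missing idea is the \emph{quadratic} remainder $\mathcal D\ge c_FF^{p-2}(\xi_1)F^2(\xi_2)$ of Lemma~\ref{lem-convexity}, which gives $I_{p,F}[u]\ge c\int\Fo^{\,2-n}|v|^{p-2}F^2(\nabla v)\,\dd x$. Set $\psi:=|v|^{p/2}$ and apply Jensen on Wulff spheres. This controls $\int_0^1r|\psi_0'|^2\,\dd r$, which is an $L^2$-energy $\int_0^\infty|\tfrac{\dd}{\dd t}\psi_0|^2\,\dd t$ in the variable $t$. The one-dimensional embedding of this energy into $L^{2p^*/p}\big((1+t)^{-1-p^*/p}\dd t\big)$ does hold, since its Bradley constant is $\sup_s(s/(1+s))^{1/2}<\infty$. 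Because $\psi_0\ge|v_0|^{p/2}$, this bounds the radial part by $C\,I_{p,F}[u]$.

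Your treatment of the mean-free part is a sound alternative to the paper's, which uses the Poincar\'e inequality on $\prt\uw$ plus the global Sobolev inequality for $X^\gamma\Fo^{-\beta}w_\perp$. Your version uses Sobolev--Poincar\'e on dyadic Wulff annuli and $\ell^p\subset\ell^{p^*}$, and it needs only the unweighted energy, which you do have for $p\ge2$.

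Two smaller points:
\begin{enumerate}
\item For $1<p<2$, the homogeneous term in your H\"older interpolation must carry $X_1^2$, not $X_1^p$. A remainder $\int|u|^p\Fo^{-p}X_1^p$ with $p<2$ contradicts the optimality in Theorem~\ref{pn-thm-X2}.
\item In your direct optimality argument, $u_\e$ is not in $\test(\Om)$ and must also be truncated near the origin. This truncation has to be gradual in $\log(1/\Fo)$, over a $t$-interval of length comparable to $T$. Truncating at $t=T$ over an interval of length $O(1)$ costs a Hardy difference of order $(1+T)^{1-p\e}\to\infty$.
\end{enumerate}
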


Theorem~\ref{main} proves that the Hardy difference controls a critical Sobolev norm,
with a logarithmic weight which vanishes at the origin and whose exponent is optimal.
When \(F=|\cdot|\), it reduces to the case $k=0$ of \cite[Theorem~A]{GkPs}.

In the borderline case $p=n$ we set
\[
 H_{n,F}(\Om):=\big\{u\in\test(\Om)\,\big|\,I_{n,F}[u]\le1\big\},
\]
and we prove the following optimal Leray--Trudinger inequality.

\begin{theorem}\label{anis-main-theorem}
Let \(\Om\subset\Rn\) be a bounded domain containing the origin. Then there exists
\(c(n,F)>0\), depending only on \(n\) and \(F\), such that, for every
\(\lambda<c(n,F)\),
\[
 \sup_{u\in H_{n,F}(\Om)}
 \aver_\Om
 \exp\!\left\{
 \lambda
 \left[
 |u(x)|
 X_2^{1/n}\!
 \left(\frac{\Fo(x)}{R_\Om}\right)
 \right]^{n/(n-1)}
 \right\}
 \,\dd x
 <\infty .
\]
Moreover, the exponent \(1/n\) on \(X_2\) cannot be decreased.
\end{theorem}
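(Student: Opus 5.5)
We will follow the strategy announced in the abstract: a ground-state reduction, then a splitting of $u$ into an $\Fo$-radial part and a mean-free part. Normalize $R_\Om=1$ by scaling; the quantities $I_{n,F}$ and $\Fo/R_\Om$ are scale invariant. Set $\Om\subset W_1:=\{\Fo<1\}$ and extend $u$ by zero.

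\textbf{Ground-state reduction.} Take the ground state $w(x)=X_1^{-(n-1)/n}(\Fo(x))$ and write $u=wv$. Convexity of $\xi\mapsto F^n(\xi)$ gives the pointwise inequality $F^n(a+b)\ge F^n(a)+nF^{n-1}(a)\nabla F(a)\cdot b+c\,F^n(b)$ for $n\ge2$. Using this with $a=v\nabla w$, $b=w\nabla v$, integrating by parts, and using $\nabla F(\nabla \Fo)\cdot x=\Fo$ (from $F(\nabla\Fo)=1$), we should get
\[
 I_{n,F}[u]\ \ge\ c\int_{W_1} w^n F^n(\nabla v)\,\dd x=c\int_{W_1}X_1^{-(n-1)}(\Fo)\,F^n(\nabla v)\,\dd x .
\]
The key point is that the first-order term cancels exactly against the Leray term, because $w$ solves the associated Euler--Lagrange equation.

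\textbf{Splitting.} In anisotropic polar coordinates $x=r\theta$, with $\theta\in\prt W_1$, write $v=\bar v(r)+\tilde v$, where $\bar v$ is the average over the Wulff sphere $\{\Fo=r\}$ with respect to the natural measure. For the radial part $\bar v$, the inequality reduces to the one-dimensional weighted estimate already known for $\Fo$-radial functions (the result of \cite{BlPPsNA} with exponent $1/n$). Concretely, $F(\nabla\bar v)=|\bar v'|$ since $F(\nabla\Fo)=1$, and Jensen gives $\int w^nF^n(\nabla \bar v)\lesssim\int w^nF^n(\nabla v)$. Hence $\bar u=w\bar v$ satisfies the Leray--Trudinger bound with weight $X_2^{1/n}$. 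For the mean-free part, a Poincar\'e inequality on each Wulff sphere, applied to tangential derivatives, gives
\[
 \int \frac{|\tilde v|^n}{\Fo^n}w^n\ \lesssim\ \int w^nF^n(\nabla v).
\]
Combined with the weighted gradient bound, this means $\tilde u=w\tilde v$ satisfies $\int F^n(\nabla\tilde u)\lesssim \int w^nF^n(\nabla v)+\int|\tilde v|^n|\nabla w|^n$. Since $|\nabla w|\sim w X_1/\Fo$, the last term is controlled by the Hardy-type term, which is at most $\int w^n|\tilde v|^n/\Fo^n$. Therefore $\tilde u\in W^{1,n}_0$ with bounded norm, and the anisotropic Trudinger--Moser inequality \cite{ZhZh} yields exponential integrability of $\lambda|\tilde u|^{n/(n-1)}$ with no weight at all, hence also with $X_2^{1/n}\le1$. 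Finally, $(a+b)^{n/(n-1)}\le(1+\e)a^{n/(n-1)}+C_\e b^{n/(n-1)}$ and H\"older's inequality combine the two pieces, at the cost of shrinking $\lambda$.

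\textbf{Main obstacle.} The delicate step is the mean-free estimate. The sphere Poincar\'e inequality must hold uniformly for the anisotropic tangential gradient with the weight $w^n$ (constant on spheres, so harmless). We also need $\int|\tilde v|^n w^n X_1^n/\Fo^n$ to be bounded by the reduced energy without losing the logarithm; this is where \cite{BlPPsNA} previously lost a factor and got $2/n$. Strong convexity gives $F(\xi)\ge c|\xi|$ and a smooth $\prt W_1$, so the Euclidean spherical Poincar\'e inequality transfers with constants depending on $F$.

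\textbf{Optimality.} Test with the Euclidean-type extremals adapted to $\Fo$. Take $u_\e=w\,\phi_\e(\Fo)$, where $\phi_\e$ truncates $X_2^{-\alpha}$ near $0$ for suitable $\alpha$. The energy $I_{n,F}[u_\e]$ stays bounded while the weighted exponential integral with $X_2^{\beta}$, $\beta<1/n$, diverges. Since these functions are $\Fo$-radial, this is exactly the radial computation of \cite{BlPPsNA}.
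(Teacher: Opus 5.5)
Your mean-free argument is sound and is essentially the paper's: Poincar\'e on the Wulff spheres with the radial weight $w^n=X_1^{1-n}$, then an unweighted Trudinger bound for $\tilde u=u-\bar u$. The radial step, however, has a genuine gap when $n\ge3$.

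\textbf{Where the radial step fails.} Your ground-state reduction retains only $c\int_\uw X_1^{1-n}F^n(\nabla v)\,\dd x$. You pass this to $\bar v$ by Jensen and then assert that $\bar u$ obeys the bound with $X_2^{1/n}$. That does not follow, and in fact this reduced energy cannot produce the weight $X_2^{1/n}$.

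Put $\sigma:=\log X_1^{-1}(\Fo)$ and $v=g(\sigma)$, with $'$ denoting $\dd/\dd\sigma$. Then:
\begin{itemize}
\item[(a)] $\int_\uw X_1^{1-n}F^n(\nabla v)\,\dd x=n\,\omega_{n,F}\int_0^\infty|g'|^n\,\dd\sigma$;
\item[(b)] $u=e^{(n-1)\sigma/n}g$ and $X_2=(1+\sigma)^{-1}$;
\item[(c)] on radial integrands, $\dd x$ becomes $n\,\omega_{n,F}\,e^{\sigma-n(e^{\sigma}-1)}\,\dd\sigma$.
\end{itemize}
Take $g=\sigma S^{-1/n}$ on $[0,S]$, $g=S^{(n-1)/n}$ on $[S,S+1]$, and $g$ decreasing linearly to $0$ on $[S+1,2S+1]$. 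The reduced energy is at most $2n\,\omega_{n,F}$, whereas on $[S,S+1]$
\[
 \big(|u|X_2^{\gamma}\big)^{n/(n-1)}=e^{\sigma}S\,(1+\sigma)^{-\gamma n/(n-1)}\ge c\,e^{\sigma}S^{\,1-\gamma n/(n-1)} .
\]
So for every $\lambda>0$ the exponential integral is unbounded in $S$ as soon as $\gamma<(n-1)/n$, and $1/n<(n-1)/n$ for $n\ge3$.

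Quoting the known radial theorem does not rescue the step. Its hypothesis is a bound on $I_{n,F}[\bar u]$, whose quadratic part
\[
 \int_\uw\Fo^{2-n}|\bar v|^{n-2}|\bar v'(\Fo)|^2X_1^{-1}\,\dd x=n\,\omega_{n,F}\int_0^\infty|g|^{n-2}|g'|^2\,\dd\sigma
\]
you never control; for the functions above it is of order $S^{n-2}$. Only for $n=2$ do the two energies coincide and your argument close.

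\textbf{The missing idea.} Keep the second remainder of Lemma~\ref{lem-convexity}(i), $\mathcal D\ge c_F\,F^{n-2}(\xi_1)F^2(\xi_2)$. In your notation this is the quadratic bound \eqref{link}:
\[
 \int_\uw\Fo^{2-n}|v|^{n-2}\Big|\nabla v\cdot\frac{x}{\Fo}\Big|^2X_1^{-1}\,\dd x\lesssim I_{n,F}[u].
\]
Transfer it to the radial part through $\psi:=|v|^{n/2}$, with $\psi_0$ its $\Fo$-spherical mean:
\begin{itemize}
\item[(a)] Jensen gives $|\bar v|\le\psi_0^{2/n}$ and $n\,\omega_{n,F}|\psi_0'|^2\le\int_{\prt\uw}|\nabla\psi\cdot\theta|^2\,\dd\mu$.
\item[(b)] Cauchy--Schwarz along rays, using $\psi_0(1)=0$ and $\int_r^1\frac{\dd t}{t(1-\log t)}=\log X_1^{-1}(r)$, yields $\psi_0^2X_2\lesssim I_{n,F}[u]$ pointwise.
\item[(c)] Hence, when $I_{n,F}[u]\le1$, $\big(|\bar u|X_2^{1/n}\big)^{n/(n-1)}\le C\,X_1^{-1}(\Fo)$, and $\exp\{\lambda C X_1^{-1}(\Fo)\}=e^{\lambda C}\Fo^{-\lambda C}$ is integrable on $\uw$ for $\lambda C<n$.
\end{itemize}
This is the content of Proposition~\ref{proposition key}. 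Alternatively, after keeping \eqref{link}, you could prove $I_{n,F}[\bar u]\lesssim I_{n,F}[u]$ and then invoke the radial theorem. To do so, split $|\bar v|^{n-2}\lesssim|v|^{n-2}+|\tilde v|^{n-2}$ and bound the second piece with H\"older and your sphere Poincar\'e inequality.

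\textbf{Optimality for general $\Om$.} You cannot test on $\uw_{R_\Om}$ unless $\Om$ is a Wulff ball. The $\Fo$-radial test functions must be supported in some $\uw_\rho\subset\Om$, while the weights are normalized at $R_\Om$, so the two normalizations have to be compared. The paper does this using $X_2(t/R_\Om)/X_2(t/\rho)\to1$, together with the $X_1^nX_2^2$-improved Leray inequality to compare the Leray--Hardy differences.
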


As for Trudinger's inequality, Theorem~\ref{anis-main-theorem} is a consequence of a
growth estimate for $L^q$-norms, namely of the critical estimate
\begin{equation}\label{sharp Leray-Trudinger growth}
 \Bigg(
 \aver_\Om
 \left[
 |u|
 X_2^{1/n}\!
 \left(\frac{\Fo}{R_\Om}\right)
 \right]^q
 \,\dd x
 \Bigg)^{1/q}
 \le
 C(n,F)\Big(\frac{|\uw_{R_\Om}|}{|\Om|}\Big)^{1/q}\,q^{1-1/n}\,\big(I_{n,F}[u]\big)^{1/n},
\end{equation}
valid for every $u\in\test(\Om)$ and every $q>n$, where $\uw_{R_\Om}:=\{\Fo<R_\Om\}$;
estimate \eqref{sharp Leray-Trudinger growth} is
the Hardy--Leray analogue of the classical Trudinger growth estimate. When
\(F=|\cdot|\), Theorem~\ref{anis-main-theorem} reduces to the Euclidean result of
\cite{BlPPsIUMJ}. In the anisotropic setting, the optimal exponent \(1/n\) was known
only for \(\Fo\)-radial functions \cite[Theorem~3.2]{BlPPsNA}, whereas for arbitrary
functions \cite[Theorem~3.1]{BlPPsNA} yields the exponent \(2/n\). Thus
Theorem~\ref{anis-main-theorem} extends the optimal logarithmic exponent to general
functions.

Finally, in the supercritical range $p>n$ we set
\[
 \al:=1-\frac np,
 \qquad
 \diam_F(\Om):=\sup_{x,y\in\Om}\Fo(x-y),
\]
and we prove the anisotropic counterpart of \cite[Theorem~B]{Ps}.

\begin{theorem}[optimal anisotropic Hardy--Morrey inequality]
\label{pn-thm-main}
Let \(n\ge2\), \(p>n\), and let \(\Om\subset\Rn\) be a bounded domain
containing the origin. There exists a positive constant
\(C=C(n,p,F)\) such that for every
\(u\in\test(\Om\setminus\{0\})\),
\begin{equation}\label{pn-HM}
 \sup_{\substack{x,y\in\Om\\ x\ne y}}
 \left\{
 \frac{|u(x)-u(y)|}{\Fo^{\,\al}(x-y)}
 X_1^{1/p}\!
 \Big(\frac{\Fo(x-y)}{\diam_F(\Om)}\Big)
 \right\}
 \le
 C\,\big(I_{p,F}[u]\big)^{1/p}.
\end{equation}
Moreover, the weight \(X_1^{1/p}\) is optimal: the exponent \(1/p\)
cannot be decreased.
\end{theorem}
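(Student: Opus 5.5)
We follow the scheme of \cite[Theorem~B]{Ps}: a ground-state reduction, then an anisotropic Morrey representation formula applied to the transformed function.

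\textbf{Step 1: ground-state reduction.} For $p>n$ the Hardy ground state is $\Fo^{\,\al}$ with $\al=(p-n)/p$, because $\Div\big(F^{p-2}(\nabla \Fo^{\al})\,\nabla_\xi F(\nabla\Fo^{\al})\big)$ is computed from the identities $F(\nabla\Fo)=1$ and $\nabla_\xi F(\nabla \Fo)=x/\Fo$. Write $u=\Fo^{\,\al}v$ with $v\in\test(\Om\setminus\{0\})$. Using the strong convexity of $F$, which gives an inequality of the type $F^p(a+b)\ge F^p(a)+pF^{p-1}(a)\nabla_\xi F(a)\cdot b+c_pF^p(b)$, we integrate the cross term by parts and obtain
\[
 I_{p,F}[u]\ \ge\ c\int_\Om \Fo^{\,p\al}F^p(\nabla v)\,\dd x=c\int_\Om \Fo^{\,p-n}F^p(\nabla v)\,\dd x .
\]
This reduces \eqref{pn-HM} to a weighted Morrey estimate for $v$ with weight $\Fo^{p-n}$. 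The cost is that $u(x)-u(y)=\Fo^{\al}(x)v(x)-\Fo^{\al}(y)v(y)$, so we must also control $v$ itself.

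\textbf{Step 2: oscillation on Wulff balls.} Fix $x\ne y$ and set $r=\Fo(x-y)$. The anisotropic representation formula is $|w(z)-w_{B}|\le C\int_B F(\nabla w)(\xi)\,\Fo(z-\xi)^{1-n}\dd\xi$ on a Wulff ball $B$ of radius $\sim r$ containing $z$; it follows from the Euclidean one by equivalence of norms, with constants depending on $F$. Apply it to $w=u$, not to $v$. Then use $F(\nabla u)\le \Fo^{\al}F(\nabla v)+\al\Fo^{\al-1}|v|$. The second term is handled by a one-dimensional Hardy inequality along rays, which again gives $\int\Fo^{p-n}F^p(\nabla v)$ via $v(0)=0$.

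There are two cases. If $\Fo(x)\ge 2r$, then $\Fo\sim\Fo(x)$ on $B$. Hölder's inequality gives $|u(x)-u(y)|\le C r^{\al}\big(\int_B F^p(\nabla u)\big)^{1/p}$, and the $F^p(\nabla u)$ integral is bounded through $\Fo^{\al}F(\nabla v)$ together with the Hardy term. If $\Fo(x)<2r$, compare both points with $0$ inside a ball of radius $3r$. Here the logarithm appears: the term $\Fo^{\al-1}|v|$ integrated against $\Fo(z-\xi)^{1-n}$ produces the integral $\int_0^{r}t^{\al-1}\cdot t^{-\al}\,|v|\ldots$. Bounding $\sup_{\Fo\le t}|v|$ requires $\int\Fo^{p-n}F^p(\nabla v)$ along rays, and Hölder's inequality in the radial variable leaves the factor $\big(\int_{r}^{D}\frac{\dd t}{t}\big)^{(p-1)/p}$. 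Getting exactly $X_1^{-1/p}(r/D)$, and not a worse power, is the \textbf{main obstacle}. The plan is to estimate $v$ not pointwise but through a dyadic Campanato-type decomposition $v_{B_{2^{k}r}}-v_{B_{2^{k+1}r}}$. Each dyadic term is weighted by $(2^kr)^{-\al}\cdot(2^kr)^{\al}$, and summing with Hölder over the $\sim\log(D/r)$ scales yields $\log^{1/p'}$ on $v$. Multiplying by $\Fo^{\al}\le (2r)^{\al}$ and using the reduction, we aim to show that only $\log^{1/p}$ survives after combining with $u(0)=0$. This follows \cite{Ps}, where the computation is one-dimensional and transfers verbatim along $\Fo$-rays, since $\Fo$ is radial in polar coordinates adapted to the Wulff shape.

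\textbf{Step 3: optimality.} Take $u_\e=\Fo^{\al}\,\phi(\Fo)\,\psi_\e(\Fo)$, where $\psi_\e$ cuts off near $0$ at scale $\e$, namely $\psi_\e(t)=\log(t/\e^2)/\log(1/\e)$ on $[\e^2,\e]$. The computation reduces to one-dimensional integrals by the coarea formula on level sets of $\Fo$, using $F(\nabla\Fo)=1$. One shows that $I_{p,F}[u_\e]\sim \log^{1-p}(1/\e)$, while the difference quotient at $x=0$-side points $x$ with $\Fo(x)=\e$ and $y=0$ is $\sim \log^{-1}(1/\e)$ times the weight. Thus a weight $X_1^{\beta}$ with $\beta<1/p$ would give $\log^{-\beta-1}\lesssim\log^{-(p-1)/p}\cdot$const, which is a contradiction as $\e\to0$. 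We will tune $\psi_\e$ so that the exponents match exactly, following the Euclidean test functions of \cite{Ps}.
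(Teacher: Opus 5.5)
There is a genuine gap in Step~2. Your Step~1 bound $I_{p,F}[u]\ge c\int_\Om\Fo^{\,p-n}F^p(\nabla v)\,\dd x$ is correct, but it discards exactly the information the theorem needs, so no better summation in Step~2 can fix things. Since $p>n\ge2$ gives $p>2$, the weighted energy alone cannot produce $X_1^{1/p}$. Fix $\rho>0$ with $\overline{\uw_\rho}\subset\Om$, put $L:=\log(1/\e)$, and let $v=g(\Fo)$, where $g(t)=\log(\rho/t)$ on $[\e,\rho]$, $g$ is linear in $\log t$ on $[\e^2,\e]$ with $g(\e^2)=0$, and $g=0$ elsewhere. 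Then $\int\Fo^{\,p-n}F^p(\nabla v)\,\dd x\sim L$. At $\Fo(x)=\e$, $y=0$, the weighted quotient is $|v(x)|X_1^{1/p}(\e/D)\sim L^{1-1/p}$. The ratio to the energy$^{1/p}$ is therefore $L^{1-2/p}\to\infty$. So any argument whose only input is this energy, dyadic Campanato sums included, stops at the $\log^{1/p'}$ you found. Nothing in the argument makes ``only $\log^{1/p}$ survive''.

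For the same reason, the term $\al\Fo^{\al-1}|v|=\al|u|/\Fo$ cannot be ``handled by a one-dimensional Hardy inequality along rays''. The inequality $\int_0^\infty t^{-1}|g|^p\,\dd t\le C\int_0^\infty t^{p-1}|g'|^p\,\dd t$ is the critical case and has no finite constant. Moreover, $\int_\Om|u|^p\Fo^{-p}\,\dd x\le C\,I_{p,F}[u]$ would be \eqref{X2-sharp} with $\gamma=0$, which Theorem~\ref{pn-thm-X2} excludes. Your case $\Fo(x)\ge2r$ has the same defect: the Hardy term $\int_B|u|^p\Fo^{-p}$ is not controlled by $I_{p,F}[u]$ without a prior weighted pointwise bound on $|u|/\Fo$.

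What you are missing are two ingredients. The first is the quadratic remainder $I_{p,F}[u]\ge c_F\al^{p-2}\int_\Om\Fo^{\,2-n}|v|^{p-2}F^2(\nabla v)\,\dd x$, from the second inequality in \eqref{convex}. The second is the $X_1^2$-improved Hardy inequality \eqref{pn-X2bis} derived from it. For the function above both quantities are $\sim L^{p-1}$, which is the right size. In the paper they enter through the trace--Hardy inequality of Lemma~\ref{pn-lem-trace} on balls whose closure contains $0$. After the divergence theorem, the boundary term $T_r$ becomes exactly these two integrals. This yields the local bound of Theorem~\ref{pn-thm-local} for $|u|/\Fo$, then the pointwise estimate \eqref{pn-origin}. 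Only with \eqref{pn-origin} can the term $L_r(x)$ of the representation formula be controlled on arbitrary balls.

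Step~3 fails for the same underlying reason: you treat $I_{p,F}$ as if it were the $p$-energy of $v$. The weighted $p$-energy of the $\psi_\e$-transition is indeed $\sim L^{1-p}$. But $I_{p,F}$ is the integral of the nonnegative convexity remainder. Where $|v|\gg\Fo F(\nabla v)$, which is most of that transition, this remainder is comparable to $\Fo^{\,2-n}|v|^{p-2}F^2(\nabla v)$, so the transition costs $\sim1/L$. In addition, the region where the fixed cutoff $\phi$ varies contributes an $\e$-independent positive amount. Hence $I_{p,F}[u_\e]$ stays bounded away from $0$.

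At $\Fo(x)=\e$, $y=0$, the weighted quotient is $X_1^\beta(\e/D)\sim L^{-\beta}$, not $\log^{-1}$ times the weight. Also, your final relation $L^{-\beta-1}\lesssim L^{-(p-1)/p}$ holds for every $0\le\beta<1/p$, so it contradicts nothing. The profile $v\approx\log(\rho/\Fo)$ from the previous paragraph does work: there $I_{p,F}[u]\sim L^{p-1}$, the weighted quotient is $\sim L^{1-\beta}$, and the ratio $L^{1/p-\beta}$ blows up.

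The paper avoids explicit test functions. It sets $y=0$, raises the assumed inequality to the power $p$, multiplies by the integrable weight $\Fo^{-n}X_1^{1+\e/2}$ and integrates. This gives \eqref{X2-sharp} with $\gamma=2-\e/2<2$, contradicting Theorem~\ref{pn-thm-X2}.
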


In proving the optimatily we also obtain an $X_1^2$-improvement of the anisotropic
$L^p$-Hardy inequality with sharp exponent on the logarithmic weight
(Theorem~\ref{pn-thm-X2}), which is, in our
opinion, of independent interest.

The three regimes share a common ground-state mechanism: writing $u=\phi\,w$, where $\phi=\Fo^{-(n-p)/p}$ is the
ground state of the anisotropic Hardy operator, a pointwise convexity inequality for
$F^p$ and the vanishing of a cross term provide lower bounds for $I_{p,F}[u]$ in terms
of weighted energies of $w$. The anisotropy enters at two points. First, the
convexity inequalities have to be proved for $F^p$.
Second, and more important, it is not immediately clear how the theory of spherical harmonics can be adapted into this Finsler framework. For $1<p\le n$ we split the transformed function into its $\Fo$-spherical mean and a
mean-free part: the radial part is handled through sharp one-dimensional weighted
inequalities, while the mean-free part is controlled by a Poincar\'e inequality on the
boundary of the Wulff shape. A delicate point is the radial part for $2<p<n$, which is
not controlled by the reduced energy alone (see Section~\ref{sec:subcritical}) and
requires a quadratic remainder term in the convexity inequality. In the borderline case
the radial part is handled through the sharp radial estimates of \cite{BlPPsNA}, while a new idea has to be used to control of the mean-free part.
For $p>n$ the ground-state reduction is coupled with an anisotropic Morrey representation formula and Theorem~\ref{pn-thm-X2}.

The paper is organized as follows. In Section~\ref{sec:prelim} we fix the notation and
recall some preliminary results, including the ground-state reduction. Sections~\ref{sec:subcritical},
\ref{sec:borderline} and \ref{sec:supercritical} are devoted to the proofs of
Theorems~\ref{main}, \ref{anis-main-theorem} and \ref{pn-thm-main}, respectively.

\section{Preliminaries}\label{sec:prelim}

In this section we fix the notation and recall some preliminary results that will be
used throughout the paper: the basic properties of the Finsler norm and of the
logarithmic weights, two pointwise convexity inequalities, some Hardy-type inequalities
and the ground-state reduction of the Hardy difference.

\subsection{Notation and basic facts}\label{ssec:finsler}

Throughout the paper, $F:\Rn\to[0,\infty)$, $n\ge2$, is an even, positively
$1$-homogeneous and strongly convex norm, namely $F\in C^2(\Rn\setminus\{0\})$ and
$\nabla^2F^2$ is positive definite away from the origin. The polar norm of $F$ is
\[\Fo(x):=\sup_{F(\xi)\le1}x\cdot\xi;\] it is again an even, positively $1$-homogeneous,
strongly convex norm of class $C^2(\Rn\setminus\{0\})$, and $(\Fo)_\circ=F$ (see e.g.\
\cite{Schn}).

In the following we write $F_\xi:=\nabla F$, $\uw_\rho:=\{\Fo<\rho\}$, $\uw:=\uw_1$ for the
associated Wulff shape and $\omega_{n,F}:=\Ln(\uw)$ (see e.g.\ \cite{BFK});
By homogeneity there are constants such that
\begin{equation}\label{ellip}
 \lambda_F|\xi|\le F(\xi)\le\Lambda_F|\xi|,\qquad
 \lambda_\circ|x|\le\Fo(x)\le\Lambda_\circ|x|,\qquad |F_\xi|\le L_F,
\end{equation}
with $\lambda_\circ=1/\Lambda_F$ and $\Lambda_\circ=1/\lambda_F$. Moreover
\begin{equation}\label{struct}
 \nabla^2\!\big(\tfrac12F^2\big)(\eta)=F_\xi(\eta)\otimes F_\xi(\eta)+F(\eta)\,\nabla^2F(\eta),
 \qquad
 \big\langle\nabla^2\!\big(\tfrac12F^2\big)(\eta)\zeta,\zeta\big\rangle\ge\Theta_F|\zeta|^2
\end{equation}
for every $\eta\neq0$ and $\zeta\in\Rn$, for some $\Theta_F>0$.
We shall repeatedly use the following identities, valid for $\xi\in\Rn$ and
$x\neq0$ (see for example \cite{BP}):
\begin{equation}\label{Cauchy Schwarz}
 |\xi\cdot x|\le F(\xi)\,\Fo(x),\qquad \nabla\Fo(x)\cdot x=\Fo(x),\qquad
 \Div\Big\{\frac{x}{\Fo^{\,s}}\Big\}=\frac{n-s}{\Fo^{\,s}}\quad(s\in\R),
\end{equation}
\begin{equation}\label{BP-identity}
 F\big(\nabla\Fo(x)\big)=1,\qquad F_\xi\big(\nabla\Fo(x)\big)=\frac{x}{\Fo(x)}.
\end{equation}
In particular, we recall that  $\Div\{x/\Fo^{\,n}\}=0$ in $\Rn\setminus\{0\}$.

In the following we will use the anisotropic spherical coordinates $x=r\theta$, with  $r=\Fo(x)$ and
$\theta=x/\Fo(x)\in\prt\uw$. By the coarea formula applied to $\Fo$ and the
$0$-homogeneity of $\nabla\Fo$, for every $g\in L^1(\Rn)$
\begin{equation}\label{polar}
 \int_{\Rn}g\,\dd x=\int_0^\infty r^{n-1}\int_{\prt\uw}g(r\theta)\,\dd\mu(\theta)\,\dd r,
 \qquad \dd\mu:=\frac{\dd\Hnmo}{|\nabla\Fo|}\ \text{ on }\prt\uw,
\end{equation}
and $\mu(\prt\uw)=n\,\omega_{n,F}$.
If $u\in C^1$ is written as $u(x)=g(r,\theta)$, then $$\nabla u(x)\cdot\theta=\prt_rg(r,\theta),$$
by the $0$-homogeniuty of $\theta$ and $\nabla\Fo\cdot x=\Fo$; in particular using
\eqref{BP-identity} and the evenness of $F$ if $u=\tilde u(\Fo)$ is $\Fo$-radially symmetric then
$F(\nabla u)=|\tilde u'(\Fo)|$.

For a function $v$ we will denote by
\[
 v_0(r):=\aver_{\prt\uw}v(r\theta)\,\dd\mu(\theta),\qquad v_\perp:=v-v_0,
\]
its $\Fo$-spherical mean and its mean-free part, respectively. Since $\prt_r v_0$ is the
$\dd\mu$-average of $\prt_rv=\nabla v\cdot\theta$ and $|\nabla v\cdot\theta|\le F(\nabla v)$ by
\eqref{Cauchy Schwarz}, Jensen's inequality gives
\begin{equation}\label{jensen}
 n\,\omega_{n,F}\,|v_0'(r)|^q\le\int_{\prt\uw}|\nabla v\cdot\theta|^q(r\theta)\,\dd\mu(\theta)
 \le\int_{\prt\uw}F^q(\nabla v)(r\theta)\,\dd\mu(\theta),
\end{equation}
for every $q\ge1$ and $r>0$. Finally, if $h:(0,\infty)\to[0,\infty]$ is nonincreasing and $A$ is a measurable set with $|A|=|\uw_\rho|$, the
layer-cake formula gives
\begin{equation}\label{rearr}
 \int_Ah(\Fo)\,\dd x\le\int_{\uw_\rho}h(\Fo)\,\dd x=n\,\omega_{n,F}\int_0^\rho h(t)\,t^{n-1}\,\dd t .
\end{equation}

The logarithmic weights $X_1$ and $X_2$ are increasing, $X_i(1)=1$, and
$\tfrac{\dd}{\dd t}X_1(t)=X_1^2(t)/t$. We shall use the following properties.

\begin{lemma}\label{lem-X}
\begin{itemize}
\item[(i)]  Let $\al_0>-1$ and $\beta,R>0$. For all $r\in(0,R]$, all
$c>1/(\al_0+1)$ and every $D\ge e^\eta R$, where
\[
 \eta:=\max\Big\{0,\frac{(\beta-\al_0-1)c+1}{(\al_0+1)c-1}\Big\},
\]
we have
\[
 \int_0^r t^{\al_0}X_1^{-\beta}(t/D)\,\dd t\le c\,r^{\al_0+1}X_1^{-\beta}(r/D).
\]
\item[(ii)] Let $\beta,\kappa,R>0$. If $\kappa\le1$ and $0<t\le D$, then
\[
 X_1^{-\beta}(\kappa t/D)\le\Big(1+\log\frac1\kappa\Big)^\beta X_1^{-\beta}(t/D),
 \qquad
 X_1^\beta(\kappa t/D)\le X_1^\beta(t/D).
\]
If $\kappa>1$, $0<t\le R$ and $D\ge\kappa^2R$, then
\[
 X_1^{-\beta}(\kappa t/D)\le X_1^{-\beta}(t/D),
 \qquad
 X_1^\beta(\kappa t/D)\le2^\beta X_1^\beta(t/D).
\]
\end{itemize}
\end{lemma}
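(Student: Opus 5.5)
For part (i), the plan is to compare both sides as functions of $r$ and differentiate. Set
\[
 G(r):=c\,r^{\al_0+1}X_1^{-\beta}(r/D)-\int_0^r t^{\al_0}X_1^{-\beta}(t/D)\,\dd t ,
\]
so that the claim becomes $G\ge0$ on $(0,R]$. As $r\to0^+$ we have $X_1^{-\beta}(r/D)=(1+\log(D/r))^\beta$. Since $\al_0+1>0$, the factor $r^{\al_0+1}$ kills any power of the logarithm, and the integral converges for the same reason. Hence $G(0^+)=0$, and it suffices to show $G'\ge0$.

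Using $\frac{\dd}{\dd t}X_1(t)=X_1^2(t)/t$, we get $\frac{\dd}{\dd r}X_1^{-\beta}(r/D)=-\beta X_1^{1-\beta}(r/D)/r$. Therefore
\[
 G'(r)=r^{\al_0}X_1^{-\beta}(r/D)\Big[c(\al_0+1)-1-c\beta X_1(r/D)\Big].
\]
Positivity of $G'$ thus reduces to the condition $X_1(r/D)\le \frac{c(\al_0+1)-1}{c\beta}$, which is $1+\log(D/r)\ge \frac{c\beta}{(\al_0+1)c-1}$. The right-hand side is positive because $c>1/(\al_0+1)$. Since $r\le R\le e^{-\eta}D$, we have $1+\log(D/r)\ge 1+\eta$. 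A direct computation gives
\[
 \frac{c\beta}{(\al_0+1)c-1}-1=\frac{(\beta-\al_0-1)c+1}{(\al_0+1)c-1},
\]
so $1+\eta\ge\frac{c\beta}{(\al_0+1)c-1}$ by the definition of $\eta$ (the maximum with $0$ only helps). Hence $G'\ge0$ on $(0,R]$, which proves (i). The only delicate step is checking this threshold algebra and the boundary limit at $0$, and both are elementary.

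For part (ii), write $X_1^{-1}(s)=1+\log(1/s)$ and set $L:=1+\log(D/t)$. Then
\[
 X_1^{-1}(\kappa t/D)=L+\log(1/\kappa).
\]
If $\kappa\le1$ and $t\le D$, then $L\ge1$ and $\log(1/\kappa)\ge0$. This gives $L+\log(1/\kappa)\ge L$, which is the second inequality after raising to the power $-\beta$. It also gives $L+\log(1/\kappa)\le L\,(1+\log(1/\kappa))$, because $L\ge1$; raising to the power $\beta$ yields the first inequality.

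If $\kappa>1$, $t\le R$ and $D\ge\kappa^2R$, then $X_1^{-1}(\kappa t/D)=L-\log\kappa\le L$, which gives the first inequality. For the second, note that $\log(D/t)\ge 2\log\kappa$, so $L\ge 1+2\log\kappa$ and hence $\log\kappa\le (L-1)/2$. Consequently
\[
 L-\log\kappa\ge \frac{L+1}{2}\ge \frac L2>0 .
\]
This yields $X_1(\kappa t/D)\le 2X_1(t/D)$; the positivity also confirms that $\kappa t/D\le1$, so $X_1$ is evaluated in its domain. Raising to the power $\beta$ gives the claim. No step is a genuine obstacle; the main care is in the sign and domain bookkeeping just described.
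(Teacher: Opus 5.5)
Your proof is correct. In part (ii) you argue as the paper does. For $\kappa\le1$ both inequalities come from $X_1^{-1}(\kappa t/D)=X_1^{-1}(t/D)+\log\frac1\kappa$ together with $X_1^{-1}(t/D)\ge1$. For $\kappa>1$, your bound $\log\kappa\le\frac12\big(X_1^{-1}(t/D)-1\big)$ is the paper's $\log\kappa\,X_1(t/D)\le\log\kappa/(1+2\log\kappa)<\frac12$, written for the reciprocal.

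There is one small imprecision. Positivity of $1+\log(D/(\kappa t))$ only gives $\kappa t/D<e$. The fact that $\kappa t/D\le1$ follows instead from your bound $(L+1)/2\ge1$, or directly from $\kappa t\le\kappa R\le D/\kappa$.

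The real difference is in part (i), for which the paper gives no argument and simply quotes Lemma~2.5 of [Ps]. Your monotonicity argument makes the lemma self-contained. It also explains the shape of $\eta$: the sign condition $G'\ge0$ is exactly $X_1^{-1}(r/D)\ge c\beta/((\alpha_0+1)c-1)$, i.e. $\log(D/r)$ must dominate the fraction defining $\eta$.

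The same computation can be run in integrated form. Integrating by parts, $\int_0^rt^{\alpha_0}X_1^{-\beta}(t/D)\,\dd t=\frac{r^{\alpha_0+1}}{\alpha_0+1}X_1^{-\beta}(r/D)+\frac{\beta}{\alpha_0+1}\int_0^rt^{\alpha_0}X_1^{1-\beta}(t/D)\,\dd t$, and the last integral is absorbed into the left-hand side via $X_1(t/D)\le X_1(R/D)\le(1+\eta)^{-1}$. Both versions rest on the two elementary facts you checked: $t^{\alpha_0}X_1^{-\beta}(t/D)$ is integrable at $0$, and $r^{\alpha_0+1}X_1^{-\beta}(r/D)\to0$ as $r\to0^+$. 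So choosing between them is a matter of taste, while citing [Ps] is merely shorter.
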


\begin{proof}
Part (i) is proved in
\cite[Lemma~2.5]{Ps}.
Part (ii) follows from the identity $X_1^{-1}(\kappa t/D)=X_1^{-1}(t/D)+\log\frac1\kappa$ if
$\kappa\le1$, and, if $\kappa>1$, from
\[
 X_1(\kappa t/D)=\frac{X_1(t/D)}{1-\log\kappa\,X_1(t/D)},
 \qquad
 \log\kappa\,X_1(t/D)\le\frac{\log\kappa}{1+2\log\kappa}<\frac12 . \qedhere
\]
\end{proof}

The following lemma collects the pointwise convexity inequalities used in the ground-state
reduction.

\begin{lemma}\label{lem-convexity}
Let $\xi_1,\xi_2\in\Rn$ and
\[
 \mathcal D:=F^p(\xi_1+\xi_2)-F^p(\xi_1)-pF^{p-1}(\xi_1)F_\xi(\xi_1)\cdot\xi_2 ,
\]
where the last term is understood as zero when $\xi_1=0$.
\begin{itemize}
\item[(i)] If $p\ge2$, there are $\sigma_F\ge1$ ($\sigma_F=1$ if $F=|\cdot|$) and $c_F>0$ such
that
\begin{equation}\label{convex}
 \mathcal D\ge\frac{2^{1-p}}{\sigma_F^{\,p}}F^p(\xi_2),
 \qquad
 \mathcal D\ge c_F\,F^{p-2}(\xi_1)F^2(\xi_2);
\end{equation}
one may take
\[
 c_F=\min\big\{p\Theta_F2^{1-p}\Lambda_F^{-2},\,2^{3-2p}\sigma_F^{-p}\big\}.
\]
\item[(ii)] If $1<p<2$, then
\begin{equation}\label{subq}
 \mathcal D\ge c_*\,\frac{F^2(\xi_2)}{(F(\xi_1)+F(\xi_2))^{2-p}},
 \qquad c_*:=\min\Big\{1,\frac{p(p-1)\Theta_F}{2\Lambda_F^2}\Big\}.
\end{equation}
\end{itemize}
\end{lemma}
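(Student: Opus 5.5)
The approach is to reduce everything to one-dimensional inequalities along segments, using the structure \eqref{struct} of $\nabla^2(\tfrac12F^2)$ and homogeneity. Set $G:=F^p$, which is $C^1$ on $\Rn$ and $C^2$ away from the origin. By Taylor's formula with integral remainder along $t\mapsto\xi_1+t\xi_2$,
\[
 \mathcal D=\int_0^1(1-t)\,\langle\nabla^2G(\xi_1+t\xi_2)\xi_2,\xi_2\rangle\,\dd t .
\]
Writing $G=(F^2)^{p/2}$ gives
\[
 \nabla^2G(\eta)=pF^{p-2}(\eta)\,\nabla^2(\tfrac12F^2)(\eta)+p(p-2)F^{p-2}(\eta)\,F_\xi(\eta)\otimes F_\xi(\eta).
\]
If the segment passes through $0$, the set of such $t$ is negligible, since it is a single point. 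Alternatively one argues by density/continuity.

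\textbf{Case $p\ge2$.} Here the second term is nonnegative, so by \eqref{struct}
\[
 \langle\nabla^2G(\eta)\zeta,\zeta\rangle\ge p\Theta_F F^{p-2}(\eta)|\zeta|^2\ge p\Theta_F\Lambda_F^{-2}F^{p-2}(\eta)F^2(\zeta).
\]
Thus $\mathcal D\ge p\Theta_F\Lambda_F^{-2}F^2(\xi_2)\int_0^1(1-t)F^{p-2}(\xi_1+t\xi_2)\,\dd t$.

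For the second inequality in \eqref{convex}, split into two regimes.
\begin{itemize}
\item If $F(\xi_2)\le F(\xi_1)$, then for $t\in[0,1/2]$ we have $F(\xi_1+t\xi_2)\ge F(\xi_1)/2$. This gives the bound $p\Theta_F2^{2-p}\Lambda_F^{-2}\cdot\tfrac38F^{p-2}(\xi_1)F^2(\xi_2)$, up to the arithmetic of the constants.
\item If $F(\xi_2)>F(\xi_1)$, use instead the first inequality: $\mathcal D\gtrsim F^p(\xi_2)\ge F^{p-2}(\xi_1)F^2(\xi_2)$. This accounts for the second entry $2^{3-2p}\sigma_F^{-p}$ in $c_F$.
\end{itemize}

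For the first inequality, $\mathcal D\ge 2^{1-p}\sigma_F^{-p}F^p(\xi_2)$, I would mimic the Euclidean proof (Lindqvist's inequality $|b|^p\ge|a|^p+p|a|^{p-2}a\cdot(b-a)+2^{1-p}|b-a|^p$). Use the lower bound $\int_0^1(1-t)F^{p-2}(\xi_1+t\xi_2)\,\dd t\ge c\,F^{p-2}(\xi_2)$. This holds because $t\mapsto F(\xi_1+t\xi_2)$ is convex and its values at times differing by $s$ differ by at most $sF(\xi_2)$, so $F(\xi_1+t\xi_2)\ge|t-t_0|F(\xi_2)$ up to equivalence constants, where $t_0$ is the minimizer. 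Averaging $|t-t_0|^{p-2}(1-t)$ over $[0,1]$ gives a universal positive constant.

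The anisotropy constant $\sigma_F$ would absorb the ratio $\Lambda_F/\lambda_F$ and the factor $\Theta_F$. One choice is $\sigma_F$ with $\sigma_F^{-p}=\min\{1,p\Theta_F\Lambda_F^{-2}\cdot(\text{universal})\}\,2^{p-1}\cdot\ldots$, chosen so that $\sigma_F=1$ in the Euclidean case, where Lindqvist's inequality applies directly. Matching the exact constant $2^{1-p}$ is the fiddly point. Since $\sigma_F$ is free, I would simply define $\sigma_F$ by whatever constant the argument yields, and set $\sigma_F=1$ for $F=|\cdot|$ by citing the Euclidean inequality.

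\textbf{Case $1<p<2$.} Now $p-2<0$, and the rank-one term is negative. Since $F_\xi(\eta)\cdot\zeta\le F(\zeta)$ (polar Cauchy–Schwarz together with $\Fo(F_\xi)=1$) and $\nabla^2(\tfrac12F^2)\ge F_\xi\otimes F_\xi$, I would write
\[
 \nabla^2G=pF^{p-2}\big[(p-1)F_\xi\otimes F_\xi+F\nabla^2F\big]\ge p(p-1)F^{p-2}\,\nabla^2(\tfrac12F^2),
\]
using $p-1<1$ and $F\nabla^2F\ge0$ by convexity. Then \eqref{struct} gives
\[
 \langle\nabla^2G\zeta,\zeta\rangle\ge p(p-1)\Theta_F\Lambda_F^{-2}F^{p-2}(\eta)F^2(\zeta).
\]
Because $p-2<0$ and $F(\xi_1+t\xi_2)\le F(\xi_1)+F(\xi_2)$, we get
\[
 \mathcal D\ge p(p-1)\Theta_F\Lambda_F^{-2}F^2(\xi_2)(F(\xi_1)+F(\xi_2))^{p-2}\int_0^1(1-t)\,\dd t,
\]
which is exactly \eqref{subq} with constant $p(p-1)\Theta_F/(2\Lambda_F^2)\ge c_*$.

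The main obstacle is the first inequality in \eqref{convex}, with the specific $2^{1-p}$ structure, and the bookkeeping of constants. The remaining estimates are direct consequences of the Hessian computation.
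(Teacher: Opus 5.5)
Most of your proposal follows the paper. The route differs only for the first inequality in \eqref{convex}, and your bookkeeping does not reproduce the stated value of $c_F$.

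\textbf{What matches the paper.} Your Hessian analysis and your proof of \eqref{subq} are the paper's. The paper writes $\nabla^2(F^p)=pF^{p-2}B$ with $B=\nabla^2(\tfrac12F^2)+(p-2)F_\xi\otimes F_\xi$. It uses $B\ge\Theta_F\,\mathrm{Id}$ for $p\ge2$, and $B\ge(p-1)\nabla^2(\tfrac12F^2)$ for $1<p<2$, which is equivalent to $F\nabla^2F\ge0$. It then bounds $F(\xi_1+t\xi_2)\le F(\xi_1)+F(\xi_2)$ exactly as you do. The two-regime structure for the second inequality in \eqref{convex} is also the same.

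When the segment meets the origin, the paper perturbs $\xi_1$ off the line. Applying Taylor's formula directly also works, but not because $\{t_0\}$ is a single point. The reason is that $g(t)=F^p(\xi_1+t\xi_2)$ has continuous $g'$, and $g''(t)=|t-t_0|^{p-2}\langle\nabla^2F^p(\xi_2)\xi_2,\xi_2\rangle\in L^1(0,1)$ since $p>1$. Hence $g'$ is absolutely continuous.

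\textbf{Where you differ.} The paper does not prove the first inequality in \eqref{convex}; it quotes \cite[Lemma~4.1]{BlPPs}. Your self-contained route is valid, but your justification is not the right one. Convexity and the Lipschitz bound of $\varphi(t):=F(\xi_1+t\xi_2)$ give no lower bound, since constant functions satisfy both. What you need is minimality of $t_0$ together with the triangle inequality:
$\varphi(t)\ge\max\{\varphi(t_0),\,|t-t_0|F(\xi_2)-\varphi(t_0)\}\ge\tfrac12|t-t_0|F(\xi_2)$.
This holds for any norm, with no equivalence constants. It gives
$\mathcal D\ge2^{2-p}p\,\Theta_F\Lambda_F^{-2}c_p\,F^p(\xi_2)$, with $c_p:=\inf_{t_0\in\R}\int_0^1(1-t)|t-t_0|^{p-2}\,\dd t>0$. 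You can then define $\sigma_F\ge1$ from this constant, and take $\sigma_F=1$ for $F=|\cdot|$ by Lindqvist's inequality.

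Compared with the citation, this uses only \eqref{struct} and \eqref{ellip}. The cost is that $\sigma_F$ now depends on $p$, $\Theta_F$ and $\Lambda_F$, and the Euclidean value $\sigma_F=1$ must be imported separately.

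\textbf{The constant $c_F$.} The statement explicitly asserts $c_F=\min\{p\Theta_F2^{1-p}\Lambda_F^{-2},\,2^{3-2p}\sigma_F^{-p}\}$, and your bookkeeping does not give it. Splitting at $F(\xi_2)\le F(\xi_1)$ and integrating only over $[0,\tfrac12]$ gives $\tfrac34\,p\Theta_F2^{1-p}\Lambda_F^{-2}$. The other regime gives $2^{1-p}\sigma_F^{-p}$, not $2^{3-2p}\sigma_F^{-p}$ as you claim. Whenever the first entry realizes the minimum, your constant is strictly smaller than the stated one.

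The paper instead splits at $F(\xi_2)\le F(\xi_1)/2$. Then $F(\xi_1+t\xi_2)\ge F(\xi_1)/2$ on all of $[0,1]$, and Taylor's formula gives exactly $p\Theta_F2^{1-p}\Lambda_F^{-2}$. For $F(\xi_2)>F(\xi_1)/2$, one uses $F^{p-2}(\xi_2)\ge2^{2-p}F^{p-2}(\xi_1)$, which is where $2^{3-2p}\sigma_F^{-p}$ comes from.
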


\begin{proof}
The first inequality in \eqref{convex} is \cite[Lemma~4.1]{BlPPs}. The other two
inequalities follow from Taylor's formula along the segment $[\xi_1,\xi_1+\xi_2]$, and
to this aim we first need a lower bound for the Hessian of $F^p$. On
$\Rn\setminus\{0\}$ we have $\nabla^2(F^p)=pF^{p-2}B$, where, by \eqref{struct},
\[
 B:=(p-1)F_\xi\otimes F_\xi+F\nabla^2F=\nabla^2\big(\tfrac12F^2\big)+(p-2)F_\xi\otimes F_\xi .
\]
If $p\ge2$, then $B\ge\Theta_F\,\mathrm{Id}$ directly by \eqref{struct}. If $1<p<2$,
the last term in the expression of $B$ is negative, and we compensate it with the
rank-one part of $\nabla^2(\frac12F^2)$: writing
$A:=\langle\nabla^2(\frac12F^2)\zeta,\zeta\rangle$ and $s:=(F_\xi\cdot\zeta)^2$, we have
$A\ge s$ by \eqref{struct} and the convexity of $F$, and $A\ge\Theta_F|\zeta|^2$, so that
\[
 \langle B\zeta,\zeta\rangle=A-(2-p)s\ge(p-1)A\ge(p-1)\Theta_F|\zeta|^2 .
\]

We now prove the second inequality in \eqref{convex}. We may assume
$\xi_1,\xi_2\neq0$, since otherwise the inequality is trivial (recall that $c_F\le1$),
and we distinguish two cases, according to whether or not $\xi_2$ is small with respect
to $\xi_1$. If $F(\xi_2)\le F(\xi_1)/2$, the segment
$[\xi_1,\xi_1+\xi_2]$ stays in $\{F\ge F(\xi_1)/2\}$ and Taylor's formula gives
\[
 \mathcal D\ge p\Theta_F|\xi_2|^2\Big(\frac{F(\xi_1)}2\Big)^{p-2}\int_0^1(1-s)\,\dd s
 \ge p\Theta_F2^{1-p}\Lambda_F^{-2}F^{p-2}(\xi_1)F^2(\xi_2).
\]
If instead $F(\xi_2)>F(\xi_1)/2$, Taylor's formula is of no use, since the segment
may approach the origin, but the first inequality in \eqref{convex} gives
\[
 \mathcal D\ge2^{1-p}\sigma_F^{-p}F^{p-2}(\xi_2)F^2(\xi_2)
 \ge2^{3-2p}\sigma_F^{-p}F^{p-2}(\xi_1)F^2(\xi_2).
\]

It remains to prove \eqref{subq}. If the segment $[\xi_1,\xi_1+\xi_2]$ avoids the
origin, Taylor's formula and $F(\xi_1+s\xi_2)\le M:=F(\xi_1)+F(\xi_2)$ give
\[
 \mathcal D\ge\frac p2(p-1)\Theta_F|\xi_2|^2M^{p-2},
\]
whence \eqref{subq} by \eqref{ellip}.
Otherwise $\xi_1$ and $\xi_2$ are antiparallel and the segment passes through the
origin, where $F^p$ is not twice differentiable; in this case we argue by approximation:
since $n\ge2$ we may perturb $\xi_1$ into
$\xi_1+\e v$, with $v$ not parallel to $\xi_2$, and let $\e\to0$, both sides of \eqref{subq} being
continuous ($F^p\in C^1(\Rn)$ for $p>1$). This concludes the proof.
\end{proof}

\subsection{Hardy-type inequalities}\label{ssec:hardy-trace}

Throughout, $\Om\subset\Rn$ is a bounded domain containing the origin and
$R_\Om:=\sup_\Om\Fo$. We start with a weighted trace-Hardy inequality.

\begin{lemma}\label{pn-lem-trace}
Let $V\subset\Rn$ be a bounded Lipschitz domain with exterior unit normal $\nu$, let
$q\ge1$, $s\ne n$, and let $v$ be Lipschitz, compactly supported and vanishing near the
origin. Then
\begin{equation}\label{pn-trace}
 \int_V\Fo^{q-s}F^q(\nabla v)\,\dd x
 +\Big(\frac{n-s}{q}\Big)\Big|\frac{n-s}{q}\Big|^{q-2}
 \int_{\partial V}\frac{|v|^q}{\Fo^s}x\cdot\nu\,\dd\Hnmo
 \ge\Big|\frac{n-s}{q}\Big|^q\int_V\frac{|v|^q}{\Fo^s}\,\dd x.
\end{equation}
\end{lemma}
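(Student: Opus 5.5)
We prove \eqref{pn-trace} by the classical divergence (vector field) method, with the anisotropy entering only through \eqref{Cauchy Schwarz}. Set $\gamma:=\frac{n-s}{q}$ and consider the vector field $\Phi:=|v|^q\,x/\Fo^{\,s}$. Since $v$ vanishes near the origin, $\Phi$ is Lipschitz on $\overline V$ and vanishes near $0$; the singularity of $\Fo^{-s}$ is therefore harmless. Also $|v|^q$ is Lipschitz because $q\ge1$, with $\nabla|v|^q=q|v|^{q-2}v\nabla v$ a.e. (for $q=1$ read this as $\sgn(v)\nabla v$).

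The divergence theorem on the Lipschitz domain $V$, together with $\Div\{x/\Fo^{\,s}\}=(n-s)\Fo^{-s}$ from \eqref{Cauchy Schwarz}, gives
\[
 (n-s)\int_V\frac{|v|^q}{\Fo^{\,s}}\,\dd x
 =\int_{\prt V}\frac{|v|^q}{\Fo^{\,s}}\,x\cdot\nu\,\dd\Hnmo
 -q\int_V|v|^{q-2}v\,\frac{\nabla v\cdot x}{\Fo^{\,s}}\,\dd x .
\]
Multiply this identity by $\sgn(\gamma)|\gamma|^{q-2}/q$. Since $\gamma\ne0$, this is legitimate, and the left side becomes $|\gamma|^q\int_V|v|^q\Fo^{-s}$. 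The boundary term becomes exactly $\gamma|\gamma|^{q-2}\int_{\prt V}\cdots$, which is the one appearing in \eqref{pn-trace}.

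It remains to bound the gradient term,
\[
 -\sgn(\gamma)\,|\gamma|^{q-2}\int_V|v|^{q-2}v\,\frac{\nabla v\cdot x}{\Fo^{\,s}}\,\dd x
 \le|\gamma|^{q-2}\int_V|v|^{q-1}\frac{F(\nabla v)\,\Fo(x)}{\Fo^{\,s}}\,\dd x,
\]
where we used $|\nabla v\cdot x|\le F(\nabla v)\Fo(x)$ from \eqref{Cauchy Schwarz}. Split the integrand as
\[
 \big(|v|^{q-1}\Fo^{-s(q-1)/q}\big)\big(F(\nabla v)\,\Fo^{\,1-s/q}\big)
\]
and apply Young's inequality $ab\le\frac{q-1}{q}\e^{q/(q-1)}a^{q/(q-1)}+\frac1q\e^{-q}b^q$ with $\e=|\gamma|$. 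This gives
\[
 |\gamma|^{q-2}\,ab\le\frac{q-1}{q}|\gamma|^{q}\,\frac{|v|^q}{\Fo^{\,s}}
 +\frac1q|\gamma|^{-2}\ldots
\]
Here the constants must be tuned so that the coefficient of $\Fo^{q-s}F^q(\nabla v)$ is exactly $1$. Writing the bound as $\frac{q-1}{q}|\gamma|^q A+\frac1q B$, with $A=\int_V|v|^q\Fo^{-s}$ and $B=\int_V\Fo^{q-s}F^q(\nabla v)$, one checks the choice of $\e$ with $\e^{q/(q-1)}|\gamma|^{q-2}=|\gamma|^{q}\cdot|\gamma|^{-?}$ by homogeneity; equivalently, apply Hölder first and then the elementary inequality $|\gamma|^{q-1}A^{(q-1)/q}B^{1/q}\le\frac{q-1}{q}|\gamma|^qA+\frac1qB$.

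Collecting the terms gives
\[
 |\gamma|^qA\le\gamma|\gamma|^{q-2}\int_{\prt V}\cdots+\frac{q-1}{q}|\gamma|^qA+\frac1qB .
\]
The difficulty is that this only yields $\frac1q|\gamma|^qA\le\ldots+\frac1qB$, i.e.\ the boundary term with factor $q$. This is avoided by multiplying the divergence identity instead by $\sgn(\gamma)|\gamma|^{q-2}$, without the factor $1/q$. Then the gradient term carries $q|\gamma|^{q-2}$, and Hölder followed by Young, $q|\gamma|^{q-1}A^{1-1/q}B^{1/q}\le(q-1)|\gamma|^qA+B$, absorbs it. The left side becomes $q|\gamma|^{q}A$ after using $n-s=q\gamma$, and subtracting $(q-1)|\gamma|^qA$ gives exactly \eqref{pn-trace}.

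The only delicate step is this constant bookkeeping, together with justifying the divergence theorem for the Lipschitz field on the Lipschitz domain. If $A=\infty$ there is nothing to absorb; but $A$ is finite here, since $v$ is bounded, compactly supported and vanishes near $0$.
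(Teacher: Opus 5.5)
Your route is the paper's: the divergence identity for $|v|^q x\Fo^{-s}$ is exactly the paper's case $q=1$ applied to $|v|^q$, followed by \eqref{Cauchy Schwarz} and Young's inequality. The problem is the constant bookkeeping, which is wrong as written; for a sharp-constant inequality like this one, that bookkeeping is the whole content.

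Set $\gamma=(n-s)/q$, $A:=\int_V|v|^q\Fo^{-s}\,\dd x$, $B:=\int_V\Fo^{q-s}F^q(\nabla v)\,\dd x$ and $P:=\int_{\prt V}|v|^q\Fo^{-s}\,x\cdot\nu\,\dd\Hnmo$. The identity then reads $q\gamma A=P-q\int_V|v|^{q-2}v\,\Fo^{-s}\,\nabla v\cdot x\,\dd x$.

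\begin{itemize}
\item Multiplying by $\sgn(\gamma)|\gamma|^{q-2}/q$ produces $|\gamma|^{q-1}A$ and $\frac1q\sgn(\gamma)|\gamma|^{q-2}P$. You claimed $|\gamma|^qA$ and $\gamma|\gamma|^{q-2}P$.
\item Multiplying by $\sgn(\gamma)|\gamma|^{q-2}$ produces $q|\gamma|^{q-1}A$, not $q|\gamma|^qA$. It also gives a gradient term with coefficient $q|\gamma|^{q-2}$, which is not what your inequality $q|\gamma|^{q-1}A^{1-1/q}B^{1/q}\le(q-1)|\gamma|^qA+B$ controls.
\end{itemize}

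So in both attempts your stated multiplier is off by a factor $|\gamma|$. The ``factor $q$'' obstruction you found in the first attempt is an artifact: you wrote the boundary coefficient as $\gamma|\gamma|^{q-2}$ instead of the consistent $\frac1q\gamma|\gamma|^{q-2}$. With consistent coefficients, multiplying by $q$ at the end would already have closed that argument.

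The correct multiplier is $\gamma|\gamma|^{q-2}=\sgn(\gamma)|\gamma|^{q-1}$. Since $n-s=q\gamma$, the left side becomes $q|\gamma|^qA$ and the boundary term becomes exactly $\gamma|\gamma|^{q-2}P$. By \eqref{Cauchy Schwarz}, H\"older and Young, the gradient term is at most
\[
 q|\gamma|^{q-1}\int_V|v|^{q-1}F(\nabla v)\,\Fo^{1-s}\,\dd x\le q|\gamma|^{q-1}A^{1-1/q}B^{1/q}\le(q-1)|\gamma|^qA+B .
\]
Subtracting the finite quantity $(q-1)|\gamma|^qA$ gives \eqref{pn-trace}. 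With this correction your argument is complete and coincides with the paper's.
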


\begin{proof}
For $q=1$ this follows from the divergence theorem applied to the field
$|v|x\Fo^{-s}$, together with \eqref{Cauchy Schwarz}. For $q>1$ we apply the case $q=1$
to $|v|^q$ and then Young's inequality, as in \cite[Lemma~2.1]{Ps}.
\end{proof}

Choosing $V\supset\operatorname{supp}v$ and $q=s=p\neq n$, \eqref{pn-trace} gives
\eqref{anisoHardyintrod} for functions vanishing near the origin. We also need the following
one-dimensional inequalities.

\begin{lemma}\label{lem-one-dimensional}
Let $1<p<n$, $p^*=np/(n-p)$ and $B\ge1$. There exists $c=c(n,p)>0$, independent of $B$,
such that the following inequalities hold for every locally absolutely continuous
$h:(0,1]\to\R$ with $h(1)=0$:
\begin{itemize}
\item[(i)] if $1<p<2$,
\begin{equation}\label{1d}
 \int_0^1r^{p-1}|h'|^pX_1^{2-p}\left(\frac{r}{B}\right)\,\dd r
 \ge c\left(\int_0^1r^{-1}|h|^{p^*}X_1^{1+p^*/p}\left(\frac{r}{B}\right)\,\dd r\right)^{p/p^*};
\end{equation}
\item[(ii)] if $2\le p<n$,
\begin{equation}\label{1d-L2}
 \int_0^1r|h'|^2\,\dd r
 \ge c\left(\int_0^1r^{-1}|h|^{2p^*/p}X_1^{1+p^*/p}\left(\frac{r}{B}\right)\,\dd r\right)^{p/p^*}.
\end{equation}
\end{itemize}
Moreover, the following inequality, which involves neither $n$ nor $p$, holds true:
if $0<R\le D$
and $f$ is Lipschitz, compactly supported in $(0,R]$ and zero near $0$, then
\begin{equation}\label{1d-leray}
 \int_0^\infty|f'(t)|^2t\,\dd t\ge\frac14\int_0^\infty f^2(t)X_1^2\left(\frac{t}{D}\right)\,\frac{\dd t}{t}.
\end{equation}
\end{lemma}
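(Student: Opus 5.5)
The plan is to reduce all three inequalities to one-dimensional Hardy inequalities on a half-line, via the logarithmic change of variables, and then apply the Muckenhoupt–Bradley criterion (for (i),(ii)) or a direct integration by parts (for \eqref{1d-leray}).

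For (i) and (ii), set $r=e^{-s}$, $g(s):=h(e^{-s})$, and $a:=1+\log B\ge1$, so that $X_1(r/B)=(a+s)^{-1}$. Since $\dd r=-r\,\dd s$ and $h'(r)=-g'(s)/r$, we get
\[
\int_0^1 r^{p-1}|h'|^pX_1^{2-p}(r/B)\,\dd r=\int_0^\infty|g'(s)|^p(a+s)^{p-2}\,\dd s ,
\qquad
\int_0^1 r\,|h'|^2\,\dd r=\int_0^\infty|g'(s)|^2\,\dd s .
\]
On the right-hand sides the measure $r^{-1}\dd r$ becomes $\dd s$. With $\sigma:=a+s\in[a,\infty)$ and $G(\sigma):=g(\sigma-a)$, which satisfies $G(a)=0$ because $h(1)=0$, the two inequalities become:
\[
\text{(i)}\quad \int_a^\infty|G'|^p\sigma^{p-2}\,\dd\sigma\ \ge\ c\Big(\int_a^\infty|G|^{p^*}\sigma^{-1-p^*/p}\,\dd\sigma\Big)^{p/p^*};
\]
\[
\text{(ii)}\quad \int_a^\infty|G'|^2\,\dd\sigma\ \ge\ c\Big(\int_a^\infty|G|^{2p^*/p}\sigma^{-1-p^*/p}\,\dd\sigma\Big)^{p/p^*}.
\]
Write $G(\sigma)=\int_a^\sigma G'$. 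These are weighted Hardy inequalities $L^P(v)\to L^Q(w)$ with $Q\ge P$: $(P,Q)=(p,p^*)$ for (i) and $(P,Q)=(2,2p^*/p)$ for (ii). By the Muckenhoupt–Bradley criterion, the optimal constant is comparable, with factors depending only on $P$ and $Q$, to
\[
K:=\sup_{\sigma>a}\Big(\int_\sigma^\infty w\Big)^{1/Q}\Big(\int_a^\sigma v^{1-P'}\Big)^{1/P'} .
\]

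Now compute $K$ in each case. In both cases $w=\sigma^{-1-p^*/p}$, so $\int_\sigma^\infty w\le (p/p^*)\,\sigma^{-p^*/p}$.

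In case (i), $v=\sigma^{p-2}$, hence $v^{1-p'}=\sigma^{(2-p)/(p-1)}$ and $\int_a^\sigma v^{1-p'}\le (p-1)\sigma^{1/(p-1)}$. The product in $K$ is then bounded by $C\,\sigma^{-1/p}\,\sigma^{1/p}=C$.

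In case (ii), $v=1$, so $\big(\int_a^\sigma 1\big)^{1/2}\le\sigma^{1/2}$ and $\big(\int_\sigma^\infty w\big)^{p/(2p^*)}\le C\sigma^{-1/2}$. Again $K\le C$.

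In both cases the bound on $K$ is independent of $a$, which gives a constant independent of $B$. The naive pointwise bound $|G(\sigma)|\le\sigma^{1/2}\|G'\|_2$ only gives a logarithmically divergent integral. It is the Muckenhoupt criterion, which exploits the scale invariance of the weights, that closes this step, and this is where the real content of the lemma lies.

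For \eqref{1d-leray}, use $\tfrac{\dd}{\dd t}X_1(t/D)=X_1^2(t/D)/t$, valid for $t\le D$, which covers the support of $f$. Integrating by parts, with boundary terms vanishing because $f$ is zero near $0$ and compactly supported, gives
\[
\int_0^\infty f^2X_1^2\Big(\frac tD\Big)\frac{\dd t}{t}=\int f^2\,\dd\big[X_1(t/D)\big]
=-2\int f f' X_1\Big(\frac tD\Big)\dd t .
\]
By Cauchy–Schwarz, the right-hand side is at most
\[
2\Big(\int|f'|^2t\,\dd t\Big)^{1/2}\Big(\int f^2X_1^2(t/D)\,\tfrac{\dd t}{t}\Big)^{1/2}.
\]
Dividing by the last factor and squaring yields the constant $1/4$.

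If $f(R)\neq0$ is allowed, there is an extra boundary term $f^2(R)X_1(R/D)$. In that case one should instead integrate by parts against $X_1(t/D)-X_1(R/D)$, or extend $f$ by zero; I would check which convention the paper intends.
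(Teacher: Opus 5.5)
Your proposal is correct, and for (i)--(ii) it is the paper's argument: the substitution $r=e^{-t}$ turning $X_1(r/B)$ into $(1+\log B+t)^{-1}$, then the Bradley--Maz'ya criterion for $1<P\le Q<\infty$, then a bound on the Muckenhoupt constant that does not depend on $B$ (your shift $\sigma=a+s$ is only cosmetic). For \eqref{1d-leray} you integrate by parts against $\dd X_1(t/D)$ and apply Cauchy--Schwarz, whereas the paper substitutes $g=X^{1/2}f$, lets the cross term $gg'$ integrate to zero and discards the nonnegative term $X^{-1}|g'|^2t$; both are standard routes to the same constant $1/4$, and the paper's also exhibits the exact remainder. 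Your concern about $f(R)\neq0$ is moot: $f$ is Lipschitz on $(0,\infty)$ and vanishes on $(R,\infty)$, so $f(R)=0$.
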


\begin{proof}
We first rewrite \eqref{1d} and \eqref{1d-L2} on the half-line. Given $h$ as in the
statement, set $L:=1+\log B\ge1$ and $g(t):=h(e^{-t})$ for $t\ge0$. Then $g$ is locally
absolutely continuous on $[0,\infty)$, $g(0)=h(1)=0$ and $X_1(e^{-t}/B)=(L+t)^{-1}$.
Since $r=e^{-t}$ gives $\dd r=-r\,\dd t$ and $r|h'(r)|=|g'(t)|$, we obtain
\[
 \int_0^1r^{p-1}|h'|^pX_1^{2-p}\left(\frac{r}{B}\right)\,\dd r=\int_0^\infty|g'(t)|^p(L+t)^{p-2}\,\dd t,
 \qquad
 \int_0^1r|h'|^2\,\dd r=\int_0^\infty|g'(t)|^2\,\dd t,
\]
and, for every $Q>0$,
\[
 \int_0^1r^{-1}|h|^{Q}X_1^{1+p^*/p}\left(\frac{r}{B}\right)\,\dd r=\int_0^\infty|g(t)|^{Q}(L+t)^{-1-p^*/p}\,\dd t .
\]
Hence \eqref{1d} and \eqref{1d-L2} are both of the form
\begin{equation}\label{1d-halfline}
 \Big(\int_0^\infty|g|^{Q}w\,\dd t\Big)^{1/Q}\le C\Big(\int_0^\infty|g'|^{P}v\,\dd t\Big)^{1/P},
 \qquad w(t):=(L+t)^{-1-p^*/p},
\end{equation}
with $P=p$, $Q=p^*$, $v(t)=(L+t)^{p-2}$ in the case of \eqref{1d}, and $P=2$, $Q=2p^*/p$,
$v\equiv1$ in the case of \eqref{1d-L2}; in both cases $1<P\le Q<\infty$, since
$p^*>p$, and $P/Q=p/p^*$. By the Bradley--Maz'ya criterion for weighted Hardy
inequalities with $P\le Q$ (see \cite{Brad} and \cite[\S1.3, p.~47]{Ma}),
\eqref{1d-halfline} holds for every locally absolutely continuous $g$ with $g(0)=0$,
with a constant $C\le c(P,Q)\,A$, where
\[
 A:=\sup_{s>0}\Big(\int_s^\infty w\,\dd t\Big)^{1/Q}
 \Big(\int_0^s v^{-\frac1{P-1}}\,\dd t\Big)^{\frac{P-1}P} .
\]
Since $\int_s^\infty w\,\dd t=\frac p{p^*}(L+s)^{-p^*/p}$, in the case of \eqref{1d} we have
\[
 \int_0^s(L+t)^{\frac{2-p}{p-1}}\,\dd t\le(p-1)(L+s)^{\frac1{p-1}},
 \qquad\text{whence}\qquad
 A\le\Big(\frac p{p^*}\Big)^{1/p^*}(p-1)^{\frac{p-1}p},
\]
while in the case of \eqref{1d-L2}
\[
 A\le\Big(\frac p{p^*}\Big)^{\frac p{2p^*}}\sup_{s>0}\Big(\frac s{L+s}\Big)^{1/2}
 \le\Big(\frac p{p^*}\Big)^{\frac p{2p^*}} .
\]
Both bounds depend only on $n$ and $p$, and not on $L$, that is, not on $B\ge1$; this
proves \eqref{1d} and \eqref{1d-L2}. We recall that \eqref{1d-L2} also appears in the
proof of \cite[Lemma~4.1]{GkPs} with $k=0$, where it is derived from
\cite[Lemma~7.1]{FT}.

To prove \eqref{1d-leray}, set $X(t):=X_1(t/D)$, which is well defined on $(0,R]$ since
$R\le D$, and $g:=X^{1/2}f$; this is a one-dimensional ground-state substitution, in
the spirit of \S\ref{ssec:gs-reduction}, and $g$ is Lipschitz and vanishes near $0$ and on
$[R,\infty)$. Since $X'(t)=X^2(t)/t$, we have $(X^{-1/2})'=-\frac12X^{1/2}/t$, and hence
\[
 f'=X^{-1/2}g'-\frac12\,X^{1/2}\,\frac gt,
 \qquad
 |f'|^2t=X^{-1}|g'|^2t-gg'+\frac14\,X\,\frac{g^2}t .
\]
The term $gg'=\frac12(g^2)'$ integrates to zero, since $g$ vanishes near $0$ and for
$t\ge R$, and the first term is nonnegative. Therefore
\[
 \int_0^\infty|f'(t)|^2t\,\dd t\ge\frac14\int_0^\infty X\,\frac{g^2}t\,\dd t
 =\frac14\int_0^\infty f^2(t)X_1^2(t/D)\,\frac{\dd t}t . \qedhere
\]
\end{proof}

The next weighted Hardy inequality will be used in the subquadratic range.

\begin{lemma}\label{lem:weighted-Hardy-transform}
Let $1<p<n$, $D\ge R_\Om$ and $X:=X_1(\Fo/D)$. For every $u\in\test(\Om)$, the function
$w=\Fo^{n/p-1}u$ satisfies
\begin{equation}\label{eq:weighted-Hardy-transform}
 \int_\Om\Fo^{-n}|w|^pX^2\,\dd x\le p^p\int_\Om\Fo^{p-n}F^p(\nabla w)X^{2-p}\,\dd x.
\end{equation}
\end{lemma}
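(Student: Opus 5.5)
The plan is to integrate by parts against a vector field built from the identity $\Div\{x/\Fo^{\,n}\}=0$, and then apply Hölder's inequality, exactly as in the proof of the standard weighted Hardy inequality. Set $X:=X_1(\Fo/D)$, which is well defined on $\Om$ because $\Fo/D\le R_\Om/D\le1$ there. Using $\tfrac{\dd}{\dd t}X_1(t)=X_1^2(t)/t$ we get $\nabla X=X^2\,\nabla\Fo/\Fo$. Together with $\nabla\Fo(x)\cdot x=\Fo(x)$ and $\Div\{x/\Fo^{\,n}\}=0$ from \eqref{Cauchy Schwarz}, this gives on $\Rn\setminus\{0\}$
\[
 \Div\Big\{\frac{x}{\Fo^{\,n}}\,X\Big\}=\frac{x}{\Fo^{\,n}}\cdot\nabla X=\frac{X^2}{\Fo^{\,n}} .
\]

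I would multiply this identity by $|w|^p$ and integrate over $\Om\setminus\uw_\e$. The function $|w|^p$ is $C^1$ away from the origin since $p>1$, and $u$ has compact support in $\Om$. The divergence theorem then yields
\[
 \int_{\Om\setminus\uw_\e}\frac{|w|^pX^2}{\Fo^{\,n}}\,\dd x
 =-\int_{\Om\setminus\uw_\e}\frac{X}{\Fo^{\,n}}\,x\cdot\nabla|w|^p\,\dd x
 +\text{(boundary term on }\prt\uw_\e).
\]
Near the origin $|w|^p\le C\Fo^{\,n-p}$ and $X\le1$. Hence the boundary term is bounded by $C\e^{\,n-p}\e^{-n}\e\cdot\e^{\,n-1}=C\e^{\,n-p}\to0$, because $p<n$. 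For the same reason the left-hand side converges as $\e\to0$: its integrand is at most $|u|^p\Fo^{-p}$, which is integrable since $p<n$. This also shows that the quantity $I:=\int_\Om\Fo^{-n}|w|^pX^2\,\dd x$ is finite, which is needed for the absorption step below.

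Next, $|x\cdot\nabla|w|^p|\le p|w|^{p-1}F(\nabla w)\Fo$ by \eqref{Cauchy Schwarz}, so
\[
 I\le p\int_\Om X\,\Fo^{\,1-n}|w|^{p-1}F(\nabla w)\,\dd x .
\]
I would split the integrand as $\big(\Fo^{-n}X^2|w|^p\big)^{(p-1)/p}\big(\Fo^{\,p-n}X^{2-p}F^p(\nabla w)\big)^{1/p}$. The powers match: for $\Fo$, $-n(p-1)/p+(p-n)/p=1-n$, and for $X$, $2(p-1)/p+(2-p)/p=1$. Hölder's inequality with exponents $p/(p-1)$ and $p$ then gives $I\le p\,I^{(p-1)/p}J^{1/p}$, where $J:=\int_\Om\Fo^{\,p-n}F^p(\nabla w)X^{2-p}\,\dd x$. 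If $I=0$ there is nothing to prove. Otherwise $I$ is finite and positive, so dividing by $I^{(p-1)/p}$ gives $I\le p^pJ$, which is \eqref{eq:weighted-Hardy-transform}.

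The only delicate point is the behaviour at the origin: $w$ is singular in the sense that $\nabla w$ grows like $\Fo^{n/p-2}$. One must check that the small-sphere boundary term vanishes and that $I<\infty$ so that absorption is legitimate. Both follow from the estimate $|w|^p\lesssim\Fo^{\,n-p}$ and the assumption $p<n$.
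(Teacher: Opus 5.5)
Your proposal is correct and follows the paper's proof: the same identity $\Div\{xX/\Fo^{\,n}\}=\Fo^{-n}X^2$, integration by parts, the bound $|x\cdot\nabla|w|^p|\le p|w|^{p-1}F(\nabla w)\Fo$, H\"older's inequality with exponents $p/(p-1)$ and $p$, and absorption. The only difference is at the origin. The paper first reduces by density to $u$ vanishing near $0$. You instead excise $\uw_\e$, show the boundary term is $O(\e^{\,n-p})$ (the same device the paper uses for the vanishing of the cross term), and check explicitly that the left-hand side is finite before absorbing; this is equally valid.
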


\begin{proof}
By density we may assume that $u$ vanishes near the origin. Since
$\Div(xX/\Fo^{\,n})=\Fo^{-n}X^2$, an integration by parts, \eqref{Cauchy Schwarz} and
H\"older's inequality give
\begin{align*}
 \int_\Om\Fo^{-n}|w|^pX^2\,\dd x&\le p\int_\Om|w|^{p-1}F(\nabla w)\Fo^{1-n}X\,\dd x\\
 &\le p\Big(\int_\Om\Fo^{-n}|w|^pX^2\,\dd x\Big)^{\frac{p-1}p}
 \Big(\int_\Om\Fo^{p-n}F^p(\nabla w)X^{2-p}\,\dd x\Big)^{\frac1p}. \qedhere
\end{align*}
\end{proof}

\subsection{The ground-state reduction}\label{ssec:gs-reduction}

Fix $1<p\neq n$ and set $\beta:=(n-p)/p$ and $\phi:=\Fo^{-\beta}$. By \eqref{BP-identity}
and the $0$-homogeneity and oddness of $F_\xi$, we get
\begin{equation}\label{gradphi}
 \nabla\phi=-\beta\Fo^{-\beta-1}\nabla\Fo,\qquad
 F(\nabla\phi)=|\beta|\Fo^{-\beta-1},\qquad
 F_\xi(\nabla\phi)=-\sgn(\beta)\,\frac{x}{\Fo},
\end{equation}
and $\phi$ is the ground state of the anisotropic Hardy operator that means it solves
\[
 -\Div\big(F^{p-1}(\nabla\phi)F_\xi(\nabla\phi)\big)=|\beta|^p\,\frac{\phi^{p-1}}{\Fo^{\,p}}
 \qquad\text{in }\Rn\setminus\{0\}.
\]

Let $u\in\test(\Om)$ if $1<p<n$, and $u\in\test(\Om\setminus\{0\})$ if $p>n$, and set
$w:=\Fo^{\beta}u=u/\phi$. We split $\nabla u=\xi_1+\xi_2$, with $\xi_1:=w\nabla\phi$ and
$\xi_2:=\phi\nabla w$, so that, by \eqref{gradphi},  we have
\begin{equation}\label{key-eq}
 F^p(\xi_1)=\Big|\frac{n-p}{p}\Big|^{p}\frac{|u|^p}{\Fo^{\,p}},\quad
 F^p(\xi_2)=\Fo^{\,p-n}F^p(\nabla w),\quad
 F^{p-2}(\xi_1)F^2(\xi_2)=\frac{|\beta|^{p-2}|w|^{p-2}}{\Fo^{\,n-2}}F^2(\nabla w),
\end{equation}
where we used $\beta p=n-p$ and $(\beta+1)(p-2)+2\beta=n-2$.

The cross term in the expansion of $F^p(\xi_1+\xi_2)$ has zero integral, namely
\begin{equation}\label{cross}
 \int_\Om F^{p-1}(\xi_1)F_\xi(\xi_1)\cdot\xi_2\,\dd x=0 .
\end{equation}
Indeed, by \eqref{gradphi}, and since $(\beta+1)(p-1)+1+\beta=n$,
\[
 pF^{p-1}(\xi_1)F_\xi(\xi_1)\cdot\xi_2=-|\beta|^{p-2}\beta\,\frac{x\cdot\nabla(|w|^p)}{\Fo^{\,n}} .
\]
If $u$ vanishes near the origin, \eqref{cross} follows by an integration by parts, since
$\Div\{x/\Fo^{\,n}\}=0$. If $1<p<n$ and $u\in\test(\Om)$, we integrate by parts on
$\Om\setminus\overline{\uw_\e}$: since $|w|^p=\Fo^{\,n-p}|u|^p$, the integrand is $O(\Fo^{-p})$
and the boundary term on $\prt\uw_\e$ is $O(\e^{\,n-p})$, so that we may let $\e\to0$.

\begin{proposition}\label{prop-clean}
Let $u$ and $w=\Fo^{\beta}u$ be as above.
\begin{itemize}
\item[(i)] If $2\le p\neq n$, then
\begin{equation}\label{clean}
 I_{p,F}[u]\ge\frac{2^{1-p}}{\sigma_F^{\,p}}\int_\Om\Fo^{\,p-n}F^p(\nabla w)\,\dd x,
 \qquad
 I_{p,F}[u]\ge c_F|\beta|^{p-2}\int_\Om\Fo^{\,2-n}|w|^{p-2}F^2(\nabla w)\,\dd x .
\end{equation}
\item[(ii)] If $1<p<2$ and $D\ge R_\Om$, then
\begin{equation}\label{clean-subquadratic}
 I_{p,F}[u]\ge C_2\int_\Om\Fo^{\,p-n}F^p(\nabla w)\,X_1^{2-p}\Big(\frac{\Fo}{D}\Big)\,\dd x,
 \qquad C_2:=c_*\big[2^{p-1}\big(1+(n-p)^p\big)\big]^{-\frac{2-p}p}.
\end{equation}
\end{itemize}
Here $\sigma_F$, $c_F$ and $c_*$ are the constants of Lemma~\ref{lem-convexity}.
\end{proposition}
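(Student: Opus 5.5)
The plan is to write the Hardy difference as the integral of the pointwise remainder $\mathcal D$ of Lemma~\ref{lem-convexity}, evaluated at $\xi_1=w\nabla\phi$ and $\xi_2=\phi\nabla w$, and then bound $\mathcal D$ from below. Since $\nabla u=\xi_1+\xi_2$, the first identity in \eqref{key-eq} and the vanishing cross term \eqref{cross} give
\[
 I_{p,F}[u]=\int_\Om\Big(F^p(\xi_1+\xi_2)-F^p(\xi_1)-pF^{p-1}(\xi_1)F_\xi(\xi_1)\cdot\xi_2\Big)\,\dd x=\int_\Om\mathcal D\,\dd x .
\]
For $1<p<n$ all the integrals involved are finite. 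Indeed, $F(\xi_1)$ and $F(\xi_2)$ are $O(\Fo^{-\beta-1})$ near the origin, so every integrand is $O(\Fo^{-p})$, which is integrable because $p<n$. For $p>n$, $u$ vanishes near $0$ and there is nothing to check.

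Part (i), $p\ge2$, then follows directly. I insert the two lower bounds of \eqref{convex} pointwise into $\int_\Om\mathcal D$. By the second and third identities in \eqref{key-eq}, $F^p(\xi_2)=\Fo^{p-n}F^p(\nabla w)$ and $F^{p-2}(\xi_1)F^2(\xi_2)=|\beta|^{p-2}|w|^{p-2}\Fo^{2-n}F^2(\nabla w)$. This yields both inequalities in \eqref{clean}.

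For part (ii), $1<p<2$, I set $M:=F(\xi_1)+F(\xi_2)$, $X:=X_1(\Fo/D)$ and $J:=\int_\Om F^p(\xi_2)X^{2-p}\,\dd x$; the quantity $J$ is exactly the right-hand side integral of \eqref{clean-subquadratic}. The main step is a reverse use of H\"older's inequality with exponents $2/p$ and $2/(2-p)$. Writing
\[
 F^p(\xi_2)X^{2-p}=\big(F^2(\xi_2)M^{p-2}\big)^{p/2}\big(M^pX^2\big)^{(2-p)/2}
\]
and using \eqref{subq}, I get
\[
 J\le\Big(\frac{I_{p,F}[u]}{c_*}\Big)^{p/2}\Big(\int_\Om M^pX^2\,\dd x\Big)^{(2-p)/2}.
\]
It remains to bound $\int_\Om M^pX^2\,\dd x$ by a multiple of $J$. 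By convexity, $M^p\le2^{p-1}(F^p(\xi_1)+F^p(\xi_2))$. For the $\xi_2$ part, since $X\le1$ we have $X^2\le X^{2-p}$, so this part is at most $J$. For the $\xi_1$ part, \eqref{key-eq} gives $F^p(\xi_1)=|\beta|^p\Fo^{-n}|w|^p$, and Lemma~\ref{lem:weighted-Hardy-transform} (applicable since $D\ge R_\Om$) bounds its integral by $|\beta|^pp^p J=(n-p)^pJ$. Hence $\int_\Om M^pX^2\,\dd x\le K J$ with $K:=2^{p-1}(1+(n-p)^p)$.

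Plugging this in gives $J^{p/2}\le c_*^{-p/2}K^{(2-p)/2}I_{p,F}[u]^{p/2}$, provided $0<J<\infty$; the case $J=0$ is trivial. Raising to the power $2/p$ gives $J\le c_*^{-1}K^{(2-p)/p}I_{p,F}[u]$, which is \eqref{clean-subquadratic} with $C_2=c_*K^{-(2-p)/p}$. The only delicate points are the finiteness of $J$, needed to divide by it, which is the $O(\Fo^{-p})$ integrability noted in the first paragraph, and choosing the splitting in the H\"older step so that the leftover factor $M^pX^2$ is controlled by Lemma~\ref{lem:weighted-Hardy-transform}. I expect this H\"older step to be the main obstacle, and the exponent $2$ on $X$ in that lemma is precisely what makes it close.
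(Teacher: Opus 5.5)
Your proposal follows the paper's proof essentially step for step. You write $I_{p,F}[u]=\int_\Om\mathcal D$ via \eqref{key-eq} and \eqref{cross} and integrate \eqref{convex} pointwise for (i). For (ii) you use the same H\"older interpolation with exponents $2/p$ and $2/(2-p)$, closed by Lemma~\ref{lem:weighted-Hardy-transform}, and obtain the same constant $C_2$.

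There is one small slip in your finiteness justification. Near the origin, $F(\xi_1)=|\beta|\,|u|/\Fo$ and $F(\xi_2)=F(\nabla u-\xi_1)$ are $O(\Fo^{-1})$, not merely $O(\Fo^{-\beta-1})$; the $p$-th power of the latter would be the non-integrable $\Fo^{-n}$. With this correction, your claim that every integrand is $O(\Fo^{-p})$ holds, so $J<\infty$ and dividing by $J$ is justified.
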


\begin{proof}
By \eqref{key-eq} and \eqref{cross}, the integral of $\mathcal D$ (see
Lemma~\ref{lem-convexity}) equals $I_{p,F}[u]$. Hence (i) follows by integrating
\eqref{convex} and using \eqref{key-eq}. Let now $1<p<2$, $X:=X_1(\Fo/D)$ and
$S:=F(\xi_1)+F(\xi_2)$. By \eqref{subq}, $I_{p,F}[u]\ge c_*M_2$, where we set
\[
 M_2:=\int_\Om F^2(\xi_2)S^{p-2}\,\dd x,
 \qquad
 M_1:=\int_\Om F^p(\xi_2)X^{2-p}\,\dd x ,
\]
the latter being finite since its integrand is $O(\Fo^{-p})$. Since $M_1$ is, by
\eqref{key-eq}, the right-hand side of \eqref{clean-subquadratic} up to the constant
$C_2$, it is enough to bound $M_1$ by a multiple of $M_2$. To this aim, we interpolate
between the two integrals: H\"older's inequality with
exponents $2/p$ and $2/(2-p)$ gives
\[
 M_1\le M_2^{p/2}\Big(\int_\Om S^pX^2\,\dd x\Big)^{(2-p)/2}.
\]
The last integral is in turn controlled by $M_1$ itself. Since $0<X\le1$,
$S^p\le2^{p-1}(F^p(\xi_1)+F^p(\xi_2))$ and $F^p(\xi_1)=\beta^p\Fo^{-n}|w|^p$,
Lemma~\ref{lem:weighted-Hardy-transform} yields
\[
 \int_\Om S^pX^2\,\dd x\le2^{p-1}\big(1+(\beta p)^p\big)M_1 .
\]
Inserting this bound in the previous inequality and simplifying, we get
$M_1\le[2^{p-1}(1+(n-p)^p)]^{(2-p)/p}M_2$, and \eqref{clean-subquadratic} follows.
\end{proof}

As a first consequence we obtain an $X_1^2$-improvement of the anisotropic Hardy
inequality, which will play a key role in the supercritical range, and whose sharpness
is the common source of the three optimality statements of Theorems~\ref{main},
\ref{anis-main-theorem} and \ref{pn-thm-main}.

\begin{theorem}\label{pn-thm-X2}
Let $1<p<\infty$, $p\neq n$. Then for every $u\in\test(\Om\setminus\{0\})$ and every $D\ge R_\Om$,
\begin{equation}\label{pn-X2}
 I_{p,F}[u]\ \ge\ c_{n,p,F}
 \int_\Om\frac{|u|^p}{\Fo^{\,p}}\,X_1^2\Big(\frac{\Fo}{D}\Big)\,\dd x ,
 \qquad
 c_{n,p,F}:=\begin{cases}
 \dfrac{c_F\,|\beta|^{p-2}}{p^{2}} & \text{if } p\ge2,\\[6pt]
 \dfrac{C_2}{p^{p}} & \text{if } 1<p<2 .
 \end{cases}
\end{equation}
Moreover, the weight \(X_1^2\) is optimal, in the sense that the exponent \(2\)
cannot be decreased.
\end{theorem}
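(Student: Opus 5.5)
The inequality follows from the ground-state reduction of Proposition~\ref{prop-clean}, combined with a weighted one-dimensional Hardy inequality in the variable $\Fo$. Write $u=\phi w$ with $w=\Fo^{\beta}u$, $X:=X_1(\Fo/D)$, and note that $|u|^p\Fo^{-p}=\Fo^{-n}|w|^p$. So the target is
\[
\int_\Om \Fo^{-n}|w|^pX^2\,\dd x\lesssim I_{p,F}[u].
\]

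For $1<p<2$ the argument is immediate. Lemma~\ref{lem:weighted-Hardy-transform} bounds the left-hand side by $p^p\int\Fo^{p-n}F^p(\nabla w)X^{2-p}$, and \eqref{clean-subquadratic} bounds this by $p^pC_2^{-1}I_{p,F}[u]$. This gives exactly $c_{n,p,F}=C_2/p^p$. The lemma is stated for $1<p<n$, but its proof only uses $\Div(xX\Fo^{-n})=\Fo^{-n}X^2$, integration by parts (legitimate since $u$ vanishes near $0$) and H\"older, so it holds verbatim for $p>n$ as well.

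For $p\ge2$ I use the second inequality in \eqref{clean}, which involves $\Fo^{2-n}|w|^{p-2}F^2(\nabla w)$. Set $v:=|w|^{p/2}$. This function is Lipschitz, vanishes near the origin, and satisfies $F(\nabla v)=\tfrac p2|w|^{(p-2)/2}F(\nabla w)$ a.e. Hence
\[
\int\Fo^{2-n}|w|^{p-2}F^2(\nabla w)=\frac{4}{p^2}\int\Fo^{2-n}F^2(\nabla v).
\]
It remains to show $\int\Fo^{-n}v^2X^2\le4\int\Fo^{2-n}F^2(\nabla v)$. I repeat the proof of Lemma~\ref{lem:weighted-Hardy-transform} with exponent $2$. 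Integrating by parts with the field $xX\Fo^{-n}$ gives
\[
\int\Fo^{-n}v^2X^2\le2\int v\,F(\nabla v)\Fo^{1-n}X,
\]
and Cauchy--Schwarz turns this into the claimed bound with constant $4$, using $X^{0}\le1$. Combining the steps gives $\int\Fo^{-n}|w|^pX^2\le \frac{p^2}{c_F|\beta|^{p-2}}I_{p,F}[u]$, which is the stated constant. Alternatively, one could apply \eqref{1d-leray} on each ray, but the divergence argument is cleaner.

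The main obstacle is optimality: for every $\gamma<2$ no constant $c$ can give $I_{p,F}[u]\ge c\int|u|^p\Fo^{-p}X^\gamma$. I test with $\Fo$-radial functions $u=\phi\,g(\Fo)$, so that $F(\nabla u)=|(\phi g)'|$. For these, $I_{p,F}$ reduces to a one-dimensional integral in $t=-\log(\Fo/D)$. The natural near-extremals are $g=X^{-1/p}$-type profiles truncated away from $0$ (and from $R_\Om$, using a small Wulff ball $\uw_\rho\subset\Om$); for $p<n$ one can also take $u=\phi X^{-\e}$-type profiles. Expanding $F^p(\xi_1+\xi_2)$ in the regime $F(\xi_2)\ll F(\xi_1)$, the difference $I_{p,F}[u]$ behaves like $\frac{p-1}{2}|\beta|^{p-2}\int\Fo^{-n}|w|^{p-2}|w'|^2\Fo^2$ plus lower-order terms. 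With $w=X^{\theta}$ and $\theta=(1-\e)/p$ in log-scale, this quantity is comparable to $\int\Fo^{-n}X^{2-\e}$ times $\e$-independent constants. Meanwhile $\int|u|^p\Fo^{-p}X^\gamma=\int\Fo^{-n}X^{\gamma+1-\e}\cdot X^{-1}$ diverges like a power of $1/\e$ that is larger whenever $\gamma<2$. Making the cutoff errors and the Taylor remainder uniformly negligible as $\e\to0$ is the delicate part. I would handle it by choosing cutoffs at $\Fo=\delta$ with $\delta\to0$ first, then letting $\e\to0$, exactly as in the Euclidean argument of \cite{Ps}.
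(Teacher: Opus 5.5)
Your proof of the inequality is correct. For $1<p<2$ it is exactly the paper's argument; the extension of Lemma~\ref{lem:weighted-Hardy-transform} to $p>n$ is not needed there, since $p<2\le n$. For $p\ge2$ you use a different but equivalent route. Instead of applying \eqref{1d-leray} ray by ray, you use the identity $\Div(xX\Fo^{-n})=\Fo^{-n}X^2$ and Cauchy--Schwarz, i.e.\ the computation of Lemma~\ref{lem:weighted-Hardy-transform} at exponent $2$ applied to $|w|^{p/2}$. This is valid and gives the same constant $c_F|\beta|^{p-2}/p^2$.

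The optimality part does not work as written.

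\emph{Why your choice fails.} With $w=X^{\theta}$, $\theta=(1-\e)/p>0$, one has $\Fo^2|w'|^2=\theta^2X^{2\theta+2}$. So the quadratic energy density is $\theta^2\Fo^{-n}X^{3-\e}$ and the target density is $\Fo^{-n}X^{\gamma+1-\e}$. For $\gamma\in(1,2)$ both integrals stay bounded as $\e\to0$. The same holds for the exponents you actually state, $X^{2-\e}$ against $X^{\gamma-\e}$. So no contradiction arises.

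\emph{The correct test function.} The sign of the exponent must be reversed, and no rate in $1/\e$ is involved. Take $w=X^{-\theta}$ with $\max\{0,\gamma-1\}\le p\theta<1$, and $u=\phi w$ cut off inside a Wulff ball $\uw_\rho\subset\Om$ and near the origin. For such $\Fo$-radial $u$ one has $\xi_1=a\nabla\Fo$ and $\xi_2=b\nabla\Fo$ with scalars $a,b$, so that $\mathcal D=|a+b|^p-|a|^p-p|a|^{p-2}ab$. Set $s=1-\log(\Fo/D)$, so that $\Fo^{-n}\dd x$ becomes $n\,\omega_{n,F}\,\dd s$. Then:
\begin{itemize}
\item the density of $|a|^{p-2}b^2$ is $|\beta|^{p-2}\theta^2s^{p\theta-2}$, integrable at $s=\infty$;
\item the density of $|b|^p$ is $\theta^ps^{p\theta-p}$, integrable at $s=\infty$ when $p\ge2$;
\item the density of $|u|^p\Fo^{-p}X^\gamma$ is $s^{p\theta-\gamma}$, which is not integrable, since $p\theta-\gamma\ge-1$.
\end{itemize}

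\emph{The truncation step.} What you call ``the delicate part'' still has to be done. Use the global bounds $\mathcal D\le C(|a|^{p-2}b^2+|b|^p)$ for $p\ge2$ and $\mathcal D\le C|a|^{p-2}b^2$ for $1<p<2$. With these, a cut-off linear in $s$ on $[S,2S]$ costs $O(S^{p\theta-1})$, plus $O(S^{1+p\theta-p})$ if $p\ge2$. Linearity matters when $p<2$, because $\int\eta^{p-2}<\infty$ requires $p>1$. Hence $I_{p,F}[u]=\int\mathcal D$ stays bounded as $S\to\infty$, while the right-hand side diverges.

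\emph{How the paper avoids this.} After the one-dimensional reduction you already performed, both sides coincide, up to the common factor $n\,\omega_{n,F}$, with the Euclidean radial quantities. So the known radial counterexamples apply verbatim: \cite[Theorem~5.1(ii)]{BFT1}, and \cite[Theorem~2.3]{Ps} for $p>n$. The choice of $D\ge R_\Om$ is immaterial by Lemma~\ref{lem-X}(ii).
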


\begin{proof}
If $1<p<2$ (so that $p<n$), \eqref{pn-X2} follows at once from
Lemma~\ref{lem:weighted-Hardy-transform} and \eqref{clean-subquadratic}, since
$\Fo^{-n}|w|^p=\Fo^{-p}|u|^p$. Let now $p\ge2$ and
let $\psi:=|w|^{p/2}$, which is Lipschitz and compactly supported in $\Om\setminus\{0\}$.
Since
\[
 |\nabla\psi\cdot\theta|^2=\frac{p^2}{4}|w|^{p-2}|\nabla w\cdot\theta|^2
 \le\frac{p^2}4|w|^{p-2}F^2(\nabla w),
\]
the second inequality in \eqref{clean}, \eqref{polar}
and \eqref{1d-leray} applied on each ray $s\mapsto\psi(s\theta)$ give
\begin{align*}
 \frac{p^2\,I_{p,F}[u]}{4c_F|\beta|^{p-2}}
 &\ge\int_{\prt\uw}\int_0^\infty\big|\prt_s\psi(s\theta)\big|^2s\,\dd s\,\dd\mu\\
 &\ge\frac14\int_{\prt\uw}\int_0^\infty\psi^2(s\theta)X_1^2(s/D)\,\frac{\dd s}{s}\,\dd\mu
 =\frac14\int_\Om\frac{|w|^p}{\Fo^{\,n}}X_1^2\Big(\frac{\Fo}D\Big)\dd x,
\end{align*}
and $|w|^p\Fo^{-n}=|u|^p\Fo^{-p}$.

Now we want to prove that the weight $X_1^2$ is optimal, that is, that for every
$D\ge R_\Om$ and every $\gamma<2$ there is no constant $C>0$ such that
\begin{equation}\label{X2-sharp}
 I_{p,F}[u]\ \ge\ C\int_\Om\frac{|u|^p}{\Fo^{\,p}}\,X_1^{\gamma}\Big(\frac{\Fo}{D}\Big)\,\dd x
 \qquad\text{for all }u\in\test(\Om\setminus\{0\}).
\end{equation}
To this aim, it suffices to test \eqref{X2-sharp} on $\Fo$-radial functions $u=f(\Fo)$,
with $f$ Lipschitz and compactly supported in $(0,R_\Om)$. Since $F(\nabla u)=|f'(\Fo)|$,
\eqref{polar} gives
\begin{gather*}
 I_{p,F}[u]=n\,\omega_{n,F}\int_0^\infty\Big(|f'(t)|^p-\Big|\frac{n-p}{p}\Big|^p\frac{|f(t)|^p}{t^p}\Big)t^{n-1}\,\dd t,
 \\
 \int_\Om\frac{|u|^p}{\Fo^{\,p}}X_1^{\gamma}\Big(\frac{\Fo}{D}\Big)\dd x
 =n\,\omega_{n,F}\int_0^\infty\frac{|f(t)|^p}{t^p}X_1^{\gamma}\Big(\frac tD\Big)t^{n-1}\,\dd t .
\end{gather*}
Since the factor $n\,\omega_{n,F}$ is common to both sides, \eqref{X2-sharp} restricted to
$\Fo$-radial functions is equivalent to its Euclidean counterpart restricted to radial
functions, and the latter fails for every $\gamma<2$ and every $D$ by the radial
counterexamples of \cite[\S5, Theorem~5.1(ii)]{BFT1} (see also \cite[Theorem~2.3]{Ps}
for $p>n$); we explicitly observe that the value of $D\ge R_\Om$ is immaterial here,
since by Lemma~\ref{lem-X}(ii) the weights $X_1^\gamma(t/D)$ and $X_1^\gamma(t/R_\Om)$ are
comparable on $(0,R_\Om]$, up to constants depending on $D/R_\Om$.
This concludes the proof.
\end{proof}

In the borderline case $p=n$ the same scheme, with the Leray ground state
$X_1^{-(n-1)/n}(\Fo/R_\Om)$ in place of $\phi$, gives the following lower bounds, proved in
\cite[Proposition~2.1 and Remark~2.1]{BlPPsNA}.

\begin{proposition}[\cite{BlPPsNA}]\label{proposition:link}
Let $u\in\test(\Om\setminus\{0\})$, $X_i:=X_i(\Fo/R_\Om)$, $i=1,2$, $v:=X_1^{1-1/n}u$ and
$\kappa_n:=\frac2n\big(\frac n{n-1}\big)^{n-2}$. Then
\begin{align}
 \int_\Om\Fo^{2-n}|v|^{n-2}\Big|\nabla v\cdot\frac{x}{\Fo}\Big|^2X_1^{-1}\,\dd x
 &\le\kappa_n I_{n,F}[u],\label{link}\\
 \int_\Om F^n(\nabla v)X_1^{1-n}\,\dd x&\le2^{n-1}\sigma_F^nI_{n,F}[u].\label{link2}
\end{align}
\end{proposition}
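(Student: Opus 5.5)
This proposition is quoted from \cite[Proposition~2.1 and Remark~2.1]{BlPPsNA}. The plan is to reproduce the ground-state scheme of \S\ref{ssec:gs-reduction}, now with the Leray ground state $\phi:=X_1^{-(n-1)/n}$, where $X_1=X_1(\Fo/R_\Om)$.

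\textbf{Splitting of the gradient.} Write $u=\phi v$ with $v=X_1^{1-1/n}u$, and split $\nabla u=\xi_1+\xi_2$ with $\xi_1:=v\nabla\phi$ and $\xi_2:=\phi\nabla v$. Using $\frac{\dd}{\dd t}X_1=X_1^2/t$ and \eqref{BP-identity}, we get
\[
 \nabla\phi=-\tfrac{n-1}{n}X_1^{1/n}\Fo^{-1}\nabla\Fo,\qquad
 F(\nabla\phi)=\tfrac{n-1}{n}X_1^{1/n}\Fo^{-1},\qquad
 F_\xi(\nabla\phi)=-x/\Fo .
\]
Consequently
\[
 F^n(\xi_1)=\big(\tfrac{n-1}{n}\big)^n\frac{|u|^n}{\Fo^{\,n}}X_1^n,
\]
which is exactly the Leray term in \eqref{anisoLeray}. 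We also have $F^n(\xi_2)=X_1^{1-n}F^n(\nabla v)$.

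\textbf{The cross term.} Compute
\[
 nF^{n-1}(\xi_1)F_\xi(\xi_1)\cdot\xi_2=-c\,X_1^{(n-1)/n}\phi\,\Fo^{1-n}\,x\cdot\nabla(|v|^n)/\Fo ,
\]
with $c=n\big(\frac{n-1}{n}\big)^{n-1}/n$ up to normalization. The powers of $X_1$ combine to $X_1^{(n-1)/n}\cdot X_1^{-(n-1)/n}=1$, so the integrand is a constant multiple of $\Fo^{-n}x\cdot\nabla|v|^n$. Its integral vanishes by $\Div\{x/\Fo^{\,n}\}=0$, since $v$ vanishes near $0$ and near $\prt\Om$.

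Unlike the case $p\ne n$, the Hardy term here is not exactly $F^n(\xi_1)$ plus a null term. The subsequent approach has to be checked carefully: $X_1^{1-1/n}$ is not a power of $\Fo$, so I expect an extra nonnegative piece of lower order coming from $\Div(x\Fo^{-n}X_1^{\gamma})=\gamma\Fo^{-n}X_1^{\gamma+1}$. This is precisely why the weaker weight $X_1^n$ appears, and the cross term should be rewritten via this divergence identity. The result should be
\[
 \int_\Om\mathcal D\,\dd x\le I_{n,F}[u],
\]
possibly with equality.

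\textbf{Proving \eqref{link2}.} Integrate the first inequality in \eqref{convex} with $p=n$:
\[
 I_{n,F}[u]\ge\frac{2^{1-n}}{\sigma_F^{\,n}}\int_\Om F^n(\xi_2)\,\dd x=\frac{2^{1-n}}{\sigma_F^{\,n}}\int_\Om X_1^{1-n}F^n(\nabla v)\,\dd x ,
\]
which is \eqref{link2}.

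\textbf{Proving \eqref{link}.} Here I need a quadratic remainder $\mathcal D\gtrsim F^{n-2}(\xi_1)\,(\xi_2\cdot\text{radial})^2$ with the explicit constant $\kappa_n$. Since only the radial derivative $\nabla v\cdot x/\Fo$ appears, I would use the one-dimensional convexity inequality
\[
 |a+b|^n-|a|^n-n|a|^{n-2}ab\ge c_n|a|^{n-2}b^2 ,
\]
applied after bounding $F(\nabla u)\ge|\nabla u\cdot x/\Fo|$ by \eqref{Cauchy Schwarz}. On each ray, $\nabla u\cdot\theta=a+b$ with $a=v\phi'$ and $b=\phi\,\prt_r v$. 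This gives
\[
 |a|^{n-2}b^2=\big(\tfrac{n-1}{n}\big)^{n-2}X_1^{(n-2)/n}\Fo^{2-n}|v|^{n-2}X_1^{-2(n-1)/n}|\nabla v\cdot\theta|^2
 =\big(\tfrac{n-1}{n}\big)^{n-2}\Fo^{2-n}|v|^{n-2}|\nabla v\cdot\theta|^2X_1^{-1}.
\]
With the elementary constant $c_n=n/2$ in the scalar inequality (valid for $n\ge2$ by the second-order Taylor bound, after checking), this yields exactly the constant $\kappa_n=\frac2n\big(\frac n{n-1}\big)^{n-2}$.

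\textbf{Main obstacle.} The hardest step is verifying the sharp scalar constant together with the exact cancellation of the cross term, including the extra $X_1$ piece. I would first try to establish the scalar inequality with constant $n/2$ directly; if that fails, I would accept a constant depending on $n$, which suffices for later use.
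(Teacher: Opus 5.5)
Your proposal is correct and follows the scheme the paper indicates: the Leray ground state $X_1^{-(n-1)/n}$ with an exactly vanishing cross term, the first inequality in \eqref{convex} with $p=n$ for \eqref{link2}, and the reduction $F(\nabla u)\ge|\nabla u\cdot x/\Fo|$ followed by a scalar convexity inequality for \eqref{link}. Both of your hesitations resolve in your favour: your own two computations ($F^n(\xi_1)$ equals the Leray integrand pointwise, and the cross term integrates to zero) already give $\int_\Om\mathcal D\,\dd x=I_{n,F}[u]$ exactly, with no extra $X_1$-piece; and the scalar inequality with $c_n=n/2$ is true and sharp, because by homogeneity it suffices to take $a=1$, and $g(b):=|1+b|^n-1-nb-\frac n2b^2$ satisfies $g'(b)=n(1+b)\big(|1+b|^{n-2}-1\big)$, so that $g\ge\min\{g(-2),g(0)\}=0$ (a crude Hessian bound along the segment would not suffice, since equality holds at $b=-2a$, where the segment passes through the origin).
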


Finally, we need the following radial estimate, which is the anisotropic counterpart of
\cite[Proposition~3.3]{BlPPsIUMJ} and is implicit in the proof of
\cite[Theorem~3.2]{BlPPsNA}; we include its short proof for completeness.

\begin{proposition}\label{proposition key}
Let $q\ge1$ and $X_i=X_i(\Fo)$, $i=1,2$. For every $\Fo$-radial function $f$, Lipschitz
on $\overline\uw$ and vanishing on $\prt\uw$,
\begin{multline*}
 \Big(\aver_\uw\big(|f|^{2/n}X_1^{(1-n)/n}X_2^{1/n}\big)^q\,\dd x\Big)^{1/q}\\
 \le\frac{e^{n/q}}{n}\,\Gamma\Big(1+q\,\frac{n-1}{n}\Big)^{1/q}
 \Big(\aver_\uw\Fo^{2-n}\Big|\nabla f\cdot\frac{x}{\Fo}\Big|^2X_1^{-1}\,\dd x\Big)^{1/n}.
\end{multline*}
\end{proposition}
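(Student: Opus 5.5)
Since $f$ is $\Fo$-radial, we write $f=\tilde f(\Fo)$ with $\tilde f(1)=0$. By \eqref{polar} both averages over $\uw$ reduce to one-dimensional integrals against $n\,t^{n-1}\,\dd t$ on $(0,1)$; the factor $n\omega_{n,F}$ cancels against $|\uw|=\omega_{n,F}$, leaving the weight $n t^{n-1}$. On $\Fo$-radial functions $\nabla f\cdot x/\Fo=\tilde f'(\Fo)$. The right-hand energy is therefore
\[
 E:=n\int_0^1 t\,|\tilde f'(t)|^2X_1^{-1}(t)\,\dd t .
\]
So the statement is a purely one-dimensional inequality, identical in form to the Euclidean radial estimate \cite[Proposition~3.3]{BlPPsIUMJ}. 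I would reprove it directly rather than merely cite it, so that the constant is seen to be unaffected by $F$.

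\textbf{Step 1: pointwise bound.} Change variables $s=\log(1-\log t)$, that is $X_1(t)=e^{-s}$ and $X_2(t)=(1+s)^{-1}$. Since $\dd X_1^{-1}/\dd t=-1/t$, Cauchy--Schwarz together with $\tilde f(1)=0$ gives
\[
 |\tilde f(t)|\le\int_t^1|\tilde f'|\,\dd\tau\le\Big(\int_t^1\tau|\tilde f'|^2X_1^{-1}\,\dd\tau\Big)^{1/2}\Big(\int_t^1\frac{X_1(\tau)}{\tau}\,\dd\tau\Big)^{1/2}.
\]
The last integral equals $-\log X_1(t)=\log(1-\log t)=X_2^{-1}(t)-1\le X_2^{-1}(t)$. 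Hence $|\tilde f|^2X_2\le E/n$, and so
\[
 |f|^{2/n}X_2^{1/n}\le (E/n)^{1/n}.
\]

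\textbf{Step 2: integrate the remaining weight.} It remains to control
\[
 n\int_0^1 X_1^{q(1-n)/n}(t)\,t^{n-1}\,\dd t .
\]
Substitute $t=e^{-\tau}$, so that $X_1^{-1}=1+\tau$. The integral becomes
\[
 n\int_0^\infty(1+\tau)^{q(n-1)/n}e^{-n\tau}\,\dd\tau .
\]
With $\sigma=n(1+\tau)$ this equals
\[
 e^{n}\,n^{-q(n-1)/n}\int_n^\infty \sigma^{q(n-1)/n}e^{-\sigma}\,\dd\sigma\le e^n n^{-q(n-1)/n}\,\Gamma\Big(1+q\frac{n-1}{n}\Big).
\]
Taking the $1/q$ power and combining with Step 1 gives the prefactor
\[
 e^{n/q}\,n^{-(n-1)/n}\,\Gamma(\cdot)^{1/q}\,n^{-1/n}=\frac{e^{n/q}}{n}\,\Gamma(\cdot)^{1/q},
\]
times $E^{1/n}$. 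Here $E$ is exactly $\aver_\uw\Fo^{2-n}|\nabla f\cdot x/\Fo|^2X_1^{-1}\,\dd x$, which is the claimed bound.

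The main obstacle is the bookkeeping in Step 1. The weight $X_1^{-1}$ in the energy must be matched with the measure $X_1/\tau\,\dd\tau$, whose primitive is exactly $X_2^{-1}$; everything else is an explicit Gamma-function computation. One should also check that the normalisations of the averages produce precisely the factor $n$ in $E$ and in Step 2, since these are what yield the $1/n$ in the final constant.
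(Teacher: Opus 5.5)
Your proposal is correct and follows the paper's proof essentially step by step. It uses the same reduction via \eqref{polar} to $E=n\int_0^1t|\tilde f'|^2X_1^{-1}\,\dd t$, the same Cauchy--Schwarz pairing of $X_1^{-1}$ with $X_1/\tau$ giving $|\tilde f|^2\le (E/n)\log X_1^{-1}\le (E/n)X_2^{-1}$, and the same Gamma-function bound for $n\int_0^1t^{n-1}X_1^{-q(n-1)/n}\,\dd t$; your substitutions $t=e^{-\tau}$, $\sigma=n(1+\tau)$ amount to the paper's $t=e^{1-s}$ followed by a rescaling, and the constants match exactly.
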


\begin{proof}
Write $f=\tilde f(\Fo)$ and set
\[
 A:=\aver_\uw\Fo^{2-n}\Big|\nabla f\cdot\frac{x}{\Fo}\Big|^2X_1^{-1}\,\dd x .
\]
Since $|\nabla f\cdot x/\Fo|=|\tilde f'(\Fo)|$, $X_1^{-1}(t)=1-\log t$ and
$|\uw|=\omega_{n,F}$, \eqref{polar} gives
\[
 A=n\int_0^1t|\tilde f'(t)|^2(1-\log t)\,\dd t .
\]
As $\tilde f(1)=0$, the Cauchy--Schwarz inequality yields, for $r\in(0,1]$,
\[
 |\tilde f(r)|^2\le\int_r^1t|\tilde f'(t)|^2(1-\log t)\,\dd t\int_r^1\frac{\dd t}{t(1-\log t)}
 \le\frac An\,\log X_1^{-1}(r),
\]
since the first integral is at most $A/n$ and the second one equals
$\log(1-\log r)=\log X_1^{-1}(r)$.
Since $X_2=(1+\log X_1^{-1})^{-1}$, this gives $|f|^{2/n}X_2^{1/n}\le(A/n)^{1/n}$ in
$\uw$, and the only weight left to integrate is $X_1^{(1-n)q/n}$: with $k:=q(n-1)/n$,
\[
 \aver_\uw\big(|f|^{2/n}X_1^{(1-n)/n}X_2^{1/n}\big)^q\,\dd x
 \le\Big(\frac An\Big)^{q/n}n\int_0^1t^{n-1}(1-\log t)^k\,\dd t ,
\]
and the substitution $t=e^{1-s}$ gives
\[
 n\int_0^1t^{n-1}(1-\log t)^k\,\dd t=n\,e^n\int_1^\infty e^{-ns}s^k\,\dd s\le e^n n^{-k}\,\Gamma(1+k).
\]
Taking the $q$-th root we obtain the claim.
\end{proof}

\section{The subcritical range: proof of Theorem~\ref{main}}\label{sec:subcritical}

In this section we prove Theorem~\ref{main}. Let $1<p<n$, $u\in\test(\Om)$ and, as in
\S\ref{ssec:gs-reduction}, $\beta=(n-p)/p$ and $w=\Fo^{\beta}u$. We recall the anisotropic
Sobolev inequality \cite[(3.1)]{BlPPs}
\begin{equation}\label{aniso-sob}
 \Big(\int_{\Rn}|f|^{p^*}\,\dd x\Big)^{1/p^*}
 \le S_{n,p,F}\Big(\int_{\Rn}F^p(\nabla f)\,\dd x\Big)^{1/p},
 \qquad f\in\test(\Rn).
\end{equation}
Let $D=BR_\Om$ with $B\ge1$, and set $X:=X_1(\Fo/D)$, $a:=1+p^*/p$ and
\[
 \mathcal E[w]:=\int_\Om\Fo^{\,p-n}F^p(\nabla w)\,X^{(2-p)_+}\,\dd x .
\]
By Proposition~\ref{prop-clean}, $I_{p,F}[u]\ge c\,\mathcal E[w]$ with $c=c(n,p,F)>0$.
Since $\beta p^*=n$, we have $|u|^{p^*}=\Fo^{-n}|w|^{p^*}$, so that it is enough to prove
\begin{equation}\label{eq:reduced-Sobolev}
 \Big(\int_\Om\Fo^{-n}|w|^{p^*}X^{a}\,\dd x\Big)^{p/p^*}\le C(n,p,F)\,I_{p,F}[u].
\end{equation}
The proof of \eqref{eq:reduced-Sobolev} proceeds in three steps. We first reduce to
the case in which $\Om$ is the Wulff shape; then we split $w$ into its $\Fo$-spherical
mean $w_0$ and its mean-free part $w_\perp$, and we estimate the two parts separately:
the radial part through the one-dimensional inequalities of
Lemma~\ref{lem-one-dimensional}, the mean-free part through a Poincar\'e inequality on
$\prt\uw$ and the anisotropic Sobolev inequality \eqref{aniso-sob}. The optimality of
the exponent $a$ is proved at the end of the section.

We start with the reduction. Both sides of \eqref{eq:reduced-Sobolev} scale in the
same way under $x\mapsto R_\Om x$ and
$\Om\subset\uw_{R_\Om}$; hence, extending $u$ by zero, we may assume that $\Om=\uw$, $R_\Om=1$
and $D=B$. We split $w=w_0+w_\perp$ and, by Minkowski's inequality in
$L^{p^*}(\uw,\Fo^{-n}X^a\dd x)$, we estimate the two parts separately.

We start with the radial part. By \eqref{polar},
\[
 \int_\uw\Fo^{-n}|w_0|^{p^*}X^a\,\dd x=n\,\omega_{n,F}\int_0^1r^{-1}|w_0|^{p^*}X_1^a(r/B)\,\dd r,
\]
and $w_0(1)=0$. If $1<p<2$, \eqref{polar} and \eqref{jensen} with $q=p$ give
\[
 n\,\omega_{n,F}\int_0^1r^{p-1}|w_0'|^pX_1^{2-p}(r/B)\,\dd r\le\mathcal E[w],
\]
and \eqref{1d}, applied to $h=w_0$, bounds
$\big(\int_0^1r^{-1}|w_0|^{p^*}X_1^a(r/B)\,\dd r\big)^{p/p^*}$ by a constant times
$\mathcal E[w]$, hence by $C\,I_{p,F}[u]$. If $2\le p<n$, the reduced energy
$\mathcal E[w]$ alone is not sufficient when $p>2$ (see below), and we rely on the
quadratic remainder in \eqref{clean} instead: we set $\psi:=|w|^{p/2}$. Since
$|\nabla\psi\cdot\theta|^2\le\frac{p^2}4|w|^{p-2}F^2(\nabla w)$, the second inequality in
\eqref{clean}, together with \eqref{polar} and \eqref{jensen} with $q=2$, gives
\[
 I_{p,F}[u]\ge\frac{4c_F\beta^{p-2}}{p^2}\int_\uw\Fo^{2-n}|\nabla\psi\cdot\theta|^2\,\dd x
 \ge\frac{4c_F\beta^{p-2}n\,\omega_{n,F}}{p^2}\int_0^1r\,|\psi_0'(r)|^2\,\dd r .
\]
By Jensen's inequality $\psi_0\ge|w_0|^{p/2}$, so that \eqref{1d-L2} applied to $h=\psi_0$
gives
\[
 \Big(\int_0^1r^{-1}|w_0|^{p^*}X_1^a(r/B)\,\dd r\Big)^{p/p^*}\le C\,I_{p,F}[u].
\]

Before turning to the mean-free part, let us justify the use of the quadratic
remainder. We point out that for $p>2$ the reduced energy alone does not control the
radial part, and a quadratic remainder, as the one in \eqref{clean}, is really needed.
Indeed, the unweighted analogue of \eqref{1d}, namely
\[
 \int_0^1r^{p-1}|h'|^p\,\dd r\ge c\Big(\int_0^1r^{-1}|h|^{p^*}X_1^a(r/B)\,\dd r\Big)^{p/p^*},
\]
fails for $p>2$: for $h_T(r):=\min\{1,\log(1/r)/T\}$ the left-hand side equals $T^{1-p}$, while the
right-hand side is bounded below by $c\,(T+1+\log B)^{-1}$. This is the reason why the
Euclidean case $p>2$ is treated separately in \cite[\S4]{GkPs}.

We now turn to the mean-free part, for which is not clear how to adapt the theory of spherical harmonics and we argue in two moves: a Poincar\'e inequality on $\prt\uw$ controls the zero-order
term $\Fo^{-n}|w_\perp|^p$ by the reduced energy, and the anisotropic Sobolev inequality
\eqref{aniso-sob}, applied to a suitable auxiliary function, then yields the critical
integrability. By \eqref{jensen}, and since $F(\nabla w_0)=|w_0'|$, we
have $\mathcal E[w_0]\le\mathcal E[w]$, whence $\mathcal E[w_\perp]\le2^p\mathcal E[w]$. For
fixed $r$, the function $w_\perp(r\,\cdot)$ has zero $\mu$-mean on $\prt\uw$, and its
tangential gradient is $r$ times the tangential part of $\nabla w_\perp(r\theta)$, whose
length is at most $\lambda_F^{-1}F(\nabla w_\perp)$ by \eqref{ellip}. Since $\prt\uw$ is a compact
connected hypersurface of class $C^2$ and $\mu$ is comparable to $\Hnmo$, the Poincar\'e
inequality on $(\prt\uw,\mu)$ (see e.g.\ \cite{H}) gives
\[
 \int_{\prt\uw}|w_\perp(r\theta)|^p\,\dd\mu(\theta)
 \le C(F,p)\,r^p\int_{\prt\uw}F^p(\nabla w_\perp)(r\theta)\,\dd\mu(\theta),
 \qquad r\in(0,1),
\]
and, multiplying by $r^{-1}X_1^{(2-p)_+}(r/B)$ and integrating,
\[
 \int_\uw\Fo^{-n}|w_\perp|^pX^{(2-p)_+}\,\dd x\le C\,\mathcal E[w_\perp]\le C\,\mathcal E[w].
\]
In order to apply \eqref{aniso-sob} we absorb the weights into the function. Let
$\gamma:=a/p^*=1/p+1/p^*$ and $z:=X^\gamma\Fo^{-\beta}w_\perp$, so that
$|z|^{p^*}=\Fo^{-n}|w_\perp|^{p^*}X^a$ is exactly the integrand we want to control.
We claim that $\int_\uw F^p(\nabla z)\,\dd x\le C\,\mathcal E[w]$. Indeed, since
$\nabla X=X^2\Fo^{-1}\nabla\Fo$,
\[
 \nabla z=X^{\gamma}\Fo^{-\beta}\nabla w_\perp
 +\big(\gamma X^{\gamma+1}-\beta X^{\gamma}\big)\Fo^{-\beta-1}\,w_\perp\nabla\Fo ,
\]
and, as $F(\pm\nabla\Fo)=1$, $\beta p=n-p$, $\gamma p=2-p/n\ge(2-p)_+$ and $0<X\le1$,
\[
 \int_\uw F^p(\nabla z)\,\dd x\le C\int_\uw\Fo^{\,p-n}F^p(\nabla w_\perp)X^{(2-p)_+}\,\dd x
 +C\int_\uw\Fo^{-n}|w_\perp|^pX^{(2-p)_+}\,\dd x\le C\,\mathcal E[w].
\]
The function $z$ is Lipschitz on $\overline\uw\setminus\{0\}$, vanishes on $\prt\uw$, and
$z\to0$ at the origin, since $X(0)=0$ and $\Fo^{-\beta}|w_\perp|=|u-u_0|\le2\sup|u|$. Since
$F(\nabla z)\in L^p$ and a point has zero $p$-capacity for $p<n$, $z\in W^{1,p}_0(\uw)$;
therefore \eqref{aniso-sob} applies to $z$ extended by zero, and gives
\[
 \Big(\int_\uw\Fo^{-n}|w_\perp|^{p^*}X^a\,\dd x\Big)^{1/p^*}=\|z\|_{L^{p^*}}
 \le C\,\big(\mathcal E[w]\big)^{1/p}
 \le C\,\big(I_{p,F}[u]\big)^{1/p}.
\]
This concludes the proof of \eqref{eq:reduced-Sobolev}.

Finally, we establish the optimality of the exponent $1+p^*/p$. The idea is to
interpolate, via H\"older's inequality, between a Hardy--Sobolev inequality with a
smaller exponent on $X_1$ and the Hardy weight $\Fo^{-p}$: this would produce a
remainder of the form $\Fo^{-p}X_1^{\vartheta}$ with $\vartheta<2$, which is forbidden by
Theorem~\ref{pn-thm-X2}.
Suppose, for the sake of contradiction, that $\e\in[0,1)$ is such that for some $D\geq R_\Om$ and some constant $C>0$
\[
  I_{p,F}[u]
 \ge
 C\Bigg(
 \int_\Om |u|^{p^\ast}
 X_1^{\,(1+p^\ast/p)\e}\!
 \Big(\frac{\Fo}{D}\Big)\,\dd x
 \Bigg)^{p/p^\ast}
 \qquad\text{for all }u\in\test(\Om).
\]
Let $\vartheta\in\R$. Applying H\"{o}lder's inequality with conjugate exponents $p^\ast/p$ and $n/p$ and using the hypothesis we get
\begin{align}\nonumber
 \int_{\Om}\frac{|u|^p}{\Fo^p}X_1^{\vartheta}\Big(\frac{\Fo}{D}\Big)~\dd x
   & = \int_{\Om}\Big\{|u|^pX_1^{(1+p/p^\ast)\e}\Big(\frac{\Fo}{D}\Big)\Big\}\Big\{\Fo^{-p}X_1^{\vartheta-(1+p/p^\ast)\e}\Big(\frac{\Fo}{D}\Big)\Big\}~\dd x
   \\ \label{cntrdctn} & \leq
    \frac1CI_{p,F}[u]\Bigg(\int_{\Om}\Fo^{-n}X_1^{\varrho}\Big(\frac{\Fo}{D}\Big)~\dd x\Bigg)^{p/n},
\end{align}
where $\varrho:=\big[\vartheta-\big(1+p/p^\ast\big)\e\big]\frac{n}{p}$. The integral on the right-hand side of \eqref{cntrdctn} does not depend on $u$, and it is finite if and only if $\varrho>1$, which is in turn equivalent to
\[
 \vartheta>2-\bigg(1+\frac{p}{p^\ast}\bigg)(1-\e).
\]
Since $\e<1$, values of $\vartheta$ less than $2$ are allowed in \eqref{cntrdctn}, and this contradicts the sharpness of the exponent $2$ on $X_1$ in \eqref{pn-X2}, see \eqref{X2-sharp}. Theorem~\ref{main} is proved.

\section{The borderline case: proof of Theorem~\ref{anis-main-theorem}}\label{sec:borderline}

In this section we prove Theorem~\ref{anis-main-theorem}; as in
Proposition~\ref{proposition:link}, $X_i=X_i(\Fo/R_\Om)$. As recalled in the
Introduction, Theorem~\ref{anis-main-theorem} follows from the growth estimate
\eqref{sharp Leray-Trudinger growth} by the standard Taylor-series argument, so that we
focus our attention on \eqref{sharp Leray-Trudinger growth}, and we start with three
reductions. Since $\Om\subset\uw_{R_\Om}$ and $u$ is
extended by zero, the averages over $\Om$ in \eqref{sharp Leray-Trudinger growth} are
bounded by $|\uw_{R_\Om}|/|\Om|$ times the corresponding
averages over $\uw_{R_\Om}$ (to the power $1/q$),
while $I_{n,F}[u]$ is unchanged; by scaling, we may thus assume that $\Om=\uw$.
Moreover, \eqref{sharp Leray-Trudinger growth} is
positively homogeneous of degree one in $u$, so that it is enough to consider
functions with $I_{n,F}[u]\le1$. Finally, since a point has zero
$n$-capacity \cite[Theorem~2.43]{HKM}, we may assume that $u$ vanishes near the
origin. Indeed, if $u\in\test(\uw)$ and $u_k:=\eta_ku$, where $\eta_k$ are logarithmic
cut-off functions vanishing near the origin, then
\[
 \int_\uw F^n(\nabla u_k)\,\dd x\longrightarrow\int_\uw F^n(\nabla u)\,\dd x,
\]
the Leray terms converge by dominated convergence, since $\Fo^{-n}X_1^n\in L^1(\uw)$, and the
left-hand side of \eqref{sharp Leray-Trudinger growth} passes to the limit by Fatou's lemma.
Summing up, it is enough to prove \eqref{sharp Leray-Trudinger growth} for $\Om=\uw$
and $u\in\test(\uw\setminus\{0\})$ with $I_{n,F}[u]\le1$.

The strategy is the same as in Section~\ref{sec:subcritical}: we split $u$ into a
radial part and a remainder, and we estimate them separately; as we shall see, the
doubly logarithmic weight is carried entirely by the radial part. More precisely,
we set $v:=X_1^{1-1/n}u$, $u_0:=v_0X_1^{-1+1/n}$, so that
$u-u_0=v_\perp X_1^{-1+1/n}$, and for $q>n$ we write
\begin{equation}\label{Mink}
 \Big(\aver_{\uw}\big(|u|X_2^{1/n}\big)^q\,\dd x\Big)^{1/q}
 \le\Big(\aver_{\uw}\big(|u_0|X_2^{1/n}\big)^q\,\dd x\Big)^{1/q}
 +\Big(\aver_{\uw}\big(|u-u_0|X_2^{1/n}\big)^q\,\dd x\Big)^{1/q}.
\end{equation}

We start with the first term in \eqref{Mink}. Let $\psi:=|v|^{n/2}$; by Jensen's inequality
$|v_0|\le\psi_0^{2/n}$, so that
\[
 |u_0|X_2^{1/n}\le\psi_0^{2/n}X_1^{(1-n)/n}X_2^{1/n}.
\]
The function $\psi_0$ is $\Fo$-radial, Lipschitz on $\overline\uw$ (for $n=2$ the
function $\psi=|v|$ is only Lipschitz, which is allowed in
Proposition~\ref{proposition key}) and vanishes on $\prt\uw$, so that
Proposition~\ref{proposition key}
with $f=\psi_0$ and Stirling's formula give
\[
 \Big(\aver_{\uw}\big(|u_0|X_2^{1/n}\big)^q\,\dd x\Big)^{1/q}
 \le c(n)\,q^{1-1/n}\Big(\aver_{\uw}\Fo^{2-n}|\psi_0'|^2X_1^{-1}\,\dd x\Big)^{1/n}.
\]
Since the weight $\Fo^{2-n}X_1^{-1}$ is radial,
\eqref{jensen} with $q=2$ allows us to replace $|\psi_0'|^2$ with the $\mu$-average
over $\prt\uw$ of $|\nabla\psi\cdot\theta|^2$, where
\[
 \nabla\psi\cdot\theta=\frac n2|v|^{\frac n2-1}\sgn(v)\,\nabla v\cdot\theta,
\]
and \eqref{link} with $I_{n,F}[u]\le1$
bounds the resulting average by $n^2\kappa_n/(4\omega_{n,F})$. Hence the first term in
\eqref{Mink} is bounded by $C(n,F)\,q^{1-1/n}$.

We now estimate the second term in \eqref{Mink}. Using $X_2\le1$ and the Euclidean
Trudinger growth estimate (see \cite{Tr} and \cite[Theorem~7.15]{GT}) applied to
$u-u_0\in W^{1,n}_0(\uw)$, it is bounded by
\[
 c(n)\,q^{1-1/n}\,\big\|\nabla\big(v_\perp X_1^{-1+1/n}\big)\big\|_{L^n(\uw)}.
\]
Since
$F(\nabla X_1^{-1+1/n})=\frac{n-1}{n}X_1^{1/n}\Fo^{-1}$ and $|\xi|\le\lambda_F^{-1}F(\xi)$,
\[
 \int_\uw\big|\nabla(v_\perp X_1^{-1+1/n})\big|^n\,\dd x
 \le\frac{2^{n-1}}{\lambda_F^n}\int_\uw\Big(F^n(\nabla v_\perp)X_1^{1-n}
 +\Big(\frac{n-1}{n}\Big)^n\frac{|v_\perp|^n}{\Fo^{\,n}}X_1\Big)\,\dd x .
\]
By \eqref{link2}, it is enough to bound each of the two summands on the right-hand
side by
\[
 C(n,F)\int_\uw F^n(\nabla v)X_1^{1-n}\,\dd x,
\]
and this is done exactly as for the
mean-free part in Section~\ref{sec:subcritical}, with $p=n$: a Jensen argument for
the gradient term and a Poincar\'e inequality on $\prt\uw$ for the zero-order term.
Indeed, since
$F(\nabla v_0)=|v_0'|$ and the weight $X_1^{1-n}$ is radial, \eqref{jensen} with $q=n$ gives
\begin{equation}\label{gapI}
 \int_\uw F^n(\nabla v_\perp)X_1^{1-n}\,\dd x\le2^n\int_\uw F^n(\nabla v)X_1^{1-n}\,\dd x .
\end{equation}
Moreover, the Poincar\'e inequality on $(\prt\uw,\mu)$ applied to the mean-free slices
$v_\perp(r\,\cdot)$, whose tangential gradient is bounded by $r\lambda_F^{-1}F(\nabla v_\perp)$,
gives
\[
 \int_{\prt\uw}|v_\perp(r\theta)|^n\,\dd\mu(\theta)
 \le C\,r^n\int_{\prt\uw}F^n(\nabla v_\perp)(r\theta)\,\dd\mu(\theta);
\]
multiplying by $r^{-1}X_1(r)\le r^{-1}X_1^{1-n}(r)$, integrating and using \eqref{gapI}, we
obtain
\begin{equation}\label{gapII}
 \int_\uw\frac{|v_\perp|^n}{\Fo^{\,n}}X_1\,\dd x\le C(n,F)\int_\uw F^n(\nabla v)X_1^{1-n}\,\dd x .
\end{equation}
Hence also the second term in \eqref{Mink} is bounded by $C(n,F)\,q^{1-1/n}$, and
\eqref{sharp Leray-Trudinger growth} is proved.

We point out that the two estimates \eqref{gapI} and \eqref{gapII} do not involve $X_2$:
the doubly logarithmic weight is carried entirely by the radial term, while the angular
modes are transferred from the Euclidean Trudinger energy back to the anisotropic
Leray--Hardy difference with a loss of multiplicative constants only, and never of the
exponent. This is why the optimal exponent $1/n$ is reached for general functions.

Finally, we prove that the exponent $1/n$ on $X_2$ cannot be decreased, and it is
enough to do so in the class of $\Fo$-radial functions. If $\Om=\uw_{R_\Om}$ is a Wulff
ball, this is inherited from the Euclidean radial case: for $u=f(\Fo)$ all the quantities in
Theorem~\ref{anis-main-theorem} reduce, up to the factor $n\,\omega_{n,F}$, to one-dimensional ones
which do not depend on $F$, and the optimality then follows from the Euclidean radial
case: the failure of the estimate for every exponent smaller than $1/n$ is proved in
\cite{MT} for every $n\ge2$ (see also \cite{CR} for $n=2$, and \cite[Theorem~3.1]{BlPPsIUMJ}). For a
general $\Om$ we test with radial functions supported in a Wulff ball $\uw_\rho\subset\Om$,
and we have to compare the weights relative to $R_\Om$, which appear in
Theorem~\ref{anis-main-theorem}, with those relative to $\rho$, for which the
counterexamples are available.
On the one hand $X_2(t/R_\Om)/X_2(t/\rho)\to1$ as $t\to0$, so that the two
doubly logarithmic weights are comparable on $(0,\rho)$. On the other hand
\[
 X_1^n(t/\rho)-X_1^n(t/R_\Om)\le n\log\frac{R_\Om}{\rho}\,X_1^{n+1}(t/\rho)
 \le C\,X_1^nX_2^2(t/\rho),
\]
so that, by the one-dimensional improved Leray inequality with remainder
$X_1^nX_2^2$ (see \cite{BFT1}), the Leray--Hardy difference on $\Om$ of such functions is
bounded by a constant times the one on $\uw_\rho$; since the radial counterexamples fail for
every value of the exponential parameter, this is enough.
Theorem~\ref{anis-main-theorem} is proved.

\section{The supercritical range: proof of Theorem~\ref{pn-thm-main}}\label{sec:supercritical}

In this section we prove Theorem~\ref{pn-thm-main}. The proof follows the strategy of
the Euclidean case \cite{Ps}, and we only highlight the points where the anisotropy
enters. Let $p>n$ and $\al:=1-n/p\in(0,1)$, so that $\beta=-\al$, the ground state is
$\phi=\Fo^{\,\al}$ and, following \cite{Ps}, we denote the transformed function by
$v:=\Fo^{-\al}u$. For $u\in\test(\Om\setminus\{0\})$, $v$ is Lipschitz, compactly supported
in $\Om$ and vanishes near the origin. Proposition~\ref{prop-clean} and
Theorem~\ref{pn-thm-X2} give
\begin{align}
 I_{p,F}[u]&\ge\frac{2^{1-p}}{\sigma_F^{\,p}}\int_\Om\Fo^{\,p-n}F^p(\nabla v)\,\dd x,
 \label{pn-B1}\\
 I_{p,F}[u]&\ge c_F\,\al^{p-2}\int_\Om\Fo^{\,2-n}|v|^{p-2}F^2(\nabla v)\,\dd x,
 \label{pn-B2bis}\\
 I_{p,F}[u]&\ge\frac{c_F\,\al^{p-2}}{p^2}\int_\Om\frac{|v|^p}{\Fo^{\,n}}X_1^2\Big(\frac\Fo D\Big)\,\dd x,
 \qquad D\ge R_\Om,
 \label{pn-X2bis}
\end{align}
and, since $\nabla u=\Fo^{\,\al}\nabla v+\al\Fo^{\,\al-1}v\,\nabla\Fo$ and $F(\pm\nabla\Fo)=1$,
\begin{equation}\label{pn-F-split}
 F(\nabla u)\le\Fo^{\,\al}F(\nabla v)+\al\,\Fo^{-n/p}\,|v| .
\end{equation}
Throughout this section $\diam(\Om)$ is the Euclidean diameter of $\Om$. The symbols
$c,C,\eta$ denote positive constants and $\Theta$ a nonnegative constant, depending only
on $n,p,q,F$, whose value may change from line to line. Several estimates below hold
only for $D\ge e^\eta\diam(\Om)$, for a suitable $\eta$: since finitely many such
conditions occur, we do not keep track of them and we collect them in the final
statements, in the form $D\ge e^{\Theta}\diam(\Om)$.

The Morrey argument relies on the following anisotropic representation formula.

\begin{lemma}\label{pn-lem-rep}
Let $B_r$ be a ball, let $u$ be Lipschitz on $\overline{B_r}$ and
\[
 u_{B_r}:=\frac1{|B_r|}\int_{B_r}u\,\dd z .
\]
Then, for every $x\in B_r$,
\begin{equation}\label{pn-rep-ball}
 |u(x)-u_{B_r}|\le\frac{2^n}{n\,\omega_n}\int_{B_r}F\big(\nabla u(z)\big)\,
 \frac{\Fo(z-x)}{|z-x|^{\,n}}\,\dd z
 \le\frac{2^n\Lambda_\circ}{n\,\omega_n}\int_{B_r}\frac{F(\nabla u(z))}{|z-x|^{\,n-1}}\,\dd z .
\end{equation}
\end{lemma}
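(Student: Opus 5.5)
We follow the classical proof of the Morrey representation formula (as in \cite[Lemma~7.16]{GT}), replacing the Euclidean Cauchy--Schwarz inequality by the anisotropic one in \eqref{Cauchy Schwarz}. Fix $x\in B_r$. By density and continuity of both sides in the Lipschitz topology (or directly, since Lipschitz functions are differentiable a.e.\ and absolutely continuous on a.e.\ segment), we may argue with $u\in C^1(\overline{B_r})$. For $z\in B_r$, convexity of the ball guarantees that the segment $[x,z]$ lies in $\overline{B_r}$. Hence
\[
 u(x)-u(z)=-\int_0^{|z-x|}\nabla u(x+s\omega)\cdot\omega\,\dd s,\qquad \omega:=\frac{z-x}{|z-x|}.
\]
By \eqref{Cauchy Schwarz} we have $|\nabla u\cdot\omega|\le F(\nabla u)\,\Fo(\omega)$. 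This is the only point where the anisotropy enters: it produces the factor $\Fo(\omega)=\Fo(z-x)/|z-x|$, which is $0$-homogeneous and therefore passes unchanged through the change of variables below.

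Averaging over $z\in B_r$, we get
\[
 |u(x)-u_{B_r}|\le\frac1{|B_r|}\int_{B_r}\int_0^{|z-x|}F\big(\nabla u(x+s\omega)\big)\Fo(\omega)\,\dd s\,\dd z .
\]
We then pass to Euclidean polar coordinates centred at $x$, writing $z=x+\rho\omega$. Extending $F(\nabla u)$ by zero outside $B_r$ and using $|z-x|<2r$, the right-hand side is bounded by
\[
 \frac1{|B_r|}\int_{S^{n-1}}\Fo(\omega)\int_0^{2r}\rho^{n-1}\int_0^{\infty}F(\nabla u)(x+s\omega)\,\dd s\,\dd\rho\,\dd\omega
 =\frac{(2r)^n}{n|B_r|}\int_{S^{n-1}}\int_0^\infty F(\nabla u)(x+s\omega)\,\Fo(\omega)\,\dd s\,\dd\omega .
\]
Setting $y=x+s\omega$, so that $\dd y=s^{n-1}\dd s\,\dd\omega$ and $\Fo(\omega)=\Fo(y-x)/|y-x|$, the last integral becomes
\[
 \int_{B_r}F(\nabla u(y))\,\frac{\Fo(y-x)}{|y-x|^{n}}\,\dd y .
\]
Since $|B_r|=\omega_n r^n$, the constant is $2^n/(n\omega_n)$, which gives the first inequality in \eqref{pn-rep-ball}.

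The second inequality follows at once from $\Fo(y-x)\le\Lambda_\circ|y-x|$, by \eqref{ellip}. No step is genuinely hard. The only care needed is in the reduction from Lipschitz to $C^1$: the fundamental theorem of calculus along a.e.\ ray follows from Fubini together with absolute continuity of Lipschitz functions on lines, in the polar coordinates centred at $x$. Alternatively, one can mollify $u$, use that $\nabla u_\varepsilon\to\nabla u$ a.e.\ with a uniform bound, and conclude by dominated convergence, since the kernel $|y-x|^{1-n}$ is integrable.
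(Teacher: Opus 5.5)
Your proposal is correct and follows the paper's proof essentially step for step. Both arguments bound $|u(x)-u(z)|$ along the segment by the anisotropic Cauchy--Schwarz inequality, average over $B_r$, pass to Euclidean polar coordinates centred at $x$ using $|z-x|\le 2r$, and undo them via $\Fo(\omega)=\Fo(y-x)/|y-x|$. Your extra justification of the Lipschitz case (a.e.\ rays or mollification) is a detail the paper leaves implicit.
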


\begin{proof}
For $x,z\in B_r$ and $e=(z-x)/|z-x|$ we have, by \eqref{Cauchy Schwarz},
\[
 |u(x)-u(z)|\le\int_0^{|z-x|}F\big(\nabla u(x+te)\big)\,\Fo(e)\,\dd t .
\]
Averaging over $z\in B_r$, using Euclidean polar coordinates centered at $x$ and
$|z-x|\le2r$, we get
\[
 |u(x)-u_{B_r}|\le\frac{(2r)^n}{n|B_r|}\int_{\mathbb S^{n-1}}\int_0^\infty
 V(x+te)\,\Fo(e)\,\dd t\,\dd e,
\]
where $V:=F(\nabla u)\chi_{B_r}$; undoing the
polar coordinates and using $\Fo(e)=\Fo(y-x)/|y-x|$ gives the first inequality, and
the second one follows from \eqref{ellip}.
\end{proof}

Since $(1-n)p'+n=\frac{p-n}{p-1}>0$ and $B_r\subset B_{2r}(x)$ for $x\in B_r$, H\"older's
inequality gives, for every $x\in B_r$ and $g\in L^p(B_r)$,
\begin{equation}\label{pn-kernel}
 \int_{B_r}\frac{g(z)}{|z-x|^{\,n-1}}\,\dd z\le C(n,p)\,r^{\,\al}\,\|g\|_{L^p(B_r)},
\end{equation}
and in particular the anisotropic Morrey inequality
\[
 |u(x)-u(y)|\le C\,\Fo(x-y)^{\al}\,\|F(\nabla u)\|_{L^p(\Rn)},
 \qquad u\in\test(\Rn).
\]
The crucial step
is the following local improvement, which is the anisotropic version of
\cite[Theorem~C]{Ps}.

\begin{theorem}\label{pn-thm-local}
Let $p>n$ and $1\le q<p$. There exist $\Theta\ge0$ and $C>0$ such that, for all
$u\in\test(\Om\setminus\{0\})$, every ball $B_r$ with $r\in(0,\diam(\Om))$ and every
$D\ge e^{\Theta}\diam(\Om)$,
\begin{equation}\label{pn-local}
 r^{\,n/p}\,X_1^{1/p}(r/D)\,
 \bigg(\frac{1}{|B_r|}\int_{B_r}\frac{|u|^q}{\Fo^{\,q}}\,\dd x\bigg)^{\!1/q}
 \le C\,\big(I_{p,F}[u]\big)^{1/p}.
\end{equation}
\end{theorem}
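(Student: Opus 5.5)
We follow the Euclidean scheme of \cite[Theorem~C]{Ps} and work with the transformed function $v=\Fo^{-\al}u$. Since $|u|^q\Fo^{-q}=|v|^q\Fo^{-qn/p}$, the estimate \eqref{pn-local} becomes a bound on
\[
 r^{n/p}X_1^{1/p}(r/D)\Big(\aver_{B_r}|v|^q\Fo^{-qn/p}\,\dd x\Big)^{1/q}
\]
in terms of the three lower bounds \eqref{pn-B1}, \eqref{pn-B2bis}, \eqref{pn-X2bis}. By H\"older's inequality we may take $q$ close to $p$; the cases of smaller $q$ follow by monotonicity of averages. We split according to the position of $B_r=B_r(x_0)$ relative to the origin, measured by $\Fo(x_0)$ against $Kr$, where $K$ is a large fixed constant.

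\emph{Near case: $B_r\subset B_{Kr}(0)$, i.e.\ the ball is close to the origin.} Here we use \eqref{pn-X2bis} directly. By H\"older with exponents $p/q$ and $p/(p-q)$,
\[
 \aver_{B_r}|v|^q\Fo^{-qn/p}\,\dd x\le\Big(\frac1{|B_r|}\int_{B_{Kr}}\frac{|v|^p}{\Fo^{n}}X_1^2\Big)^{q/p}\Big(\frac1{|B_r|}\int_{B_{Kr}}X_1^{-2q/(p-q)}\Big)^{(p-q)/p}.
\]
The last integral is estimated by Lemma~\ref{lem-X}(i) with $\al_0=n-1$ (this is where $D\ge e^\Theta\diam\Om$ enters). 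Together with Lemma~\ref{lem-X}(ii), used to pass from $Kr$ to $r$, it gives $r^{n}X_1^{-2q/(p-q)}(r/D)$. Collecting the powers of $r$ gives $r^{-n}$ in the first factor, that is, exactly $r^{-n/p}$ after taking the $q$-th root. The weight comes out as $X_1^{-2/p}(r/D)$, which is better than the needed $X_1^{-1/p}$ since $X_1\le1$. So the near case only needs \eqref{pn-X2bis}. In fact it already gives $X_1^{2/p}$ here; the loss to $X_1^{1/p}$ occurs in the far case.

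\emph{Far case: $\Fo(x_0)>Kr$.} On $B_r$ we have $\Fo\simeq\Fo(x_0)=:\rho$ with constants close to $1$, so the average is comparable to $\rho^{-n/p}$ times the average of $|v|$ (for $q$ near $p$). We write $|v|\le|v-v_{B_r}|+|v_{B_r}|$.

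For the oscillation we use Lemma~\ref{pn-lem-rep} and \eqref{pn-kernel} applied to $v$:
\[
 |v-v_{B_r}|\le Cr^{\al}\Big(\int_{B_r}F^p(\nabla v)\Big)^{1/p}\le Cr^{\al}\rho^{-(p-n)/p}\big(I_{p,F}[u]\big)^{1/p},
\]
by \eqref{pn-B1}. Multiplying by $r^{n/p}\rho^{-n/p}$ gives $C(r/\rho)I^{1/p}\le C\,I^{1/p}$.

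The mean $|v_{B_r}|$ is the main obstacle. We plan to control it by comparing it with averages of $v$ on a chain of balls that approach the annulus $\{\Fo\simeq\rho\}$ from a ball of radius $\simeq\rho$. Concretely, we use $|v_{B_r}|\le|v_{B_r}-v_{B_\rho'}|+|v_{B_\rho'}|$, where $B'_\rho$ is a ball of radius $\rho/K$ centred at $x_0$. The large ball is handled by the near-type argument: H\"older with \eqref{pn-X2bis} gives $\rho^{n/p}|v_{B'_\rho}|\rho^{-n/p}\lesssim X_1^{-2/p}(\rho/D)\,I^{1/p}$.

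The difference of means is summed dyadically over radii $r\le 2^{-j}\rho$. Each step is bounded by the Morrey argument using the quadratic bound \eqref{pn-B2bis} or \eqref{pn-B1}. With \eqref{pn-B1} alone the sum over the $\log(\rho/r)$ dyadic scales costs a factor $\log^{1/p'}$. Balancing this against \eqref{pn-X2bis} by H\"older in the scale variable yields the factor $(1+\log(\rho/r))^{1/p}\lesssim X_1^{-1/p}(r/D)$, via Lemma~\ref{lem-X}(ii). Multiplying by $X_1^{1/p}(r/D)$ then gives the bound $C\,I^{1/p}$.

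The delicate bookkeeping is precisely this dyadic summation. It must produce exactly the exponent $1/p$ of the logarithm and must remain uniform in $\rho$. We would first try to reproduce the corresponding step of \cite{Ps} verbatim, replacing $|\nabla|$ by $F(\nabla\cdot)$ through \eqref{Cauchy Schwarz} and \eqref{pn-F-split}. The anisotropy only affects constants, via \eqref{ellip}.
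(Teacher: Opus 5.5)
Your proposal has a genuine gap, and it lies in the power of the logarithm. In the near case, H\"older against \eqref{pn-X2bis} gives
\[
 \Big(\aver_{B_r}\frac{|u|^q}{\Fo^{\,q}}\,\dd x\Big)^{1/q}\le C\,r^{-n/p}X_1^{-2/p}(r/D)\,\big(I_{p,F}[u]\big)^{1/p}.
\]
Your remark that this is ``better than the needed $X_1^{-1/p}$ since $X_1\le1$'' has the inequality backwards. Since $0<X_1\le1$, we have $X_1^{-2/p}\ge X_1^{-1/p}$, so the bound is \emph{weaker} than \eqref{pn-local}. After multiplying by $r^{n/p}X_1^{1/p}(r/D)$ you are left with $C\,X_1^{-1/p}(r/D)(I_{p,F}[u])^{1/p}$, which is unbounded as $r\to0$. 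In the near case you have therefore only proved \eqref{pn-local} with $X_1^{2/p}$ in place of $X_1^{1/p}$. The exponent $1/p$ is exactly what Theorem~\ref{pn-thm-main} needs.

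This loss is intrinsic to using \eqref{pn-X2bis} alone. Take a function equal to $X_1^{-2/p}(\rho/D)$ on one dyadic $\Fo$-annulus at scale $\rho$, smoothly cut off. Its $X_1^2$-Hardy term is bounded, yet its averages saturate your bound. Only gradient information rules such functions out.

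The same defect breaks your far case. Your bound $|v_{B'_\rho}|\lesssim X_1^{-2/p}(\rho/D)(I_{p,F}[u])^{1/p}$, taken with $r\simeq\rho/K$, leaves the factor $X_1^{-1/p}(\rho/D)$, again unbounded as $\rho\to0$. By contrast, the chain of means you single out as delicate is harmless. On $B'_\rho$ we have $\Fo\simeq\rho$, so \eqref{pn-rep-ball}, \eqref{pn-kernel} and \eqref{pn-B1} give $|v_{B_r}-v_{B'_\rho}|\le\sup_{B'_\rho}|v-v_{B'_\rho}|\le C(\rho/K)^{\al}\rho^{-\al}(I_{p,F}[u])^{1/p}$, with no logarithm at all. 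What is missing is a size bound $|v|\lesssim X_1^{-1/p}(\Fo/D)(I_{p,F}[u])^{1/p}$ at scale $\rho$, at least in an averaged sense. Deferring to the corresponding step of \cite{Ps} does not supply it.

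The missing idea is to gain one logarithm by combining the quadratic remainder \eqref{pn-B2bis} with \eqref{pn-X2bis}. The paper does this for balls with $0\in\overline{B_r}$, using the trace-Hardy inequality of Lemma~\ref{pn-lem-trace} with $s=nq/p$. The bulk term is controlled by \eqref{pn-B1} with no logarithm. H\"older splits the boundary term into a geometric factor $S_r\lesssim r^nX_1^{-q/(p-q)}(r/D)$ and the factor $T_r=\int_{\prt B_r}|v|^p\Fo^{-n}X\,x\cdot\nu\,\dd\Hnmo$, which carries only \emph{one} power of $X$. The divergence theorem, with $\nabla X\cdot x=X^2$, turns $T_r$ into the $X^2$-Hardy term plus a cross term. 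Cauchy--Schwarz bounds that cross term by \eqref{pn-B2bis} and \eqref{pn-X2bis}, and the outcome is exactly $X_1^{-q/p}(r/D)$. The paper then derives the pointwise bound of Proposition~\ref{pn-prop-origin} from the representation formula, and integrates it over arbitrary balls by rearrangement instead of a near/far split.

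Alternatively, your scheme can be repaired by using \eqref{pn-B2bis} along rays, as in the proof of Theorem~\ref{pn-thm-X2}. Set $\psi=|v|^{p/2}$ and apply Cauchy--Schwarz on $[t,R_\Om]$ together with \eqref{polar}. This gives $\int_{\prt\uw}|v(t\theta)|^p\,\dd\mu\le\log(R_\Om/t)\int_\Om\Fo^{\,2-n}|\nabla\psi\cdot\theta|^2\,\dd x\le C\,X_1^{-1}(t/D)\,I_{p,F}[u]$. That yields your near case and $|v_{B'_\rho}|\lesssim X_1^{-1/p}(\rho/D)(I_{p,F}[u])^{1/p}$, both with the correct exponent.
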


\noindent The proof is divided into two stages. We first prove \eqref{pn-local} for
balls whose closure contains the origin: in this case the trace-Hardy inequality of
Lemma~\ref{pn-lem-trace} can be applied on $B_r$, and the bulk and boundary terms it
produces are controlled by the ground-state estimates
\eqref{pn-B1}--\eqref{pn-X2bis}. This special case already yields, through the
representation formula of Lemma~\ref{pn-lem-rep}, a pointwise bound for
$|u(x)|/\Fo^{\,\al}(x)$ (Proposition~\ref{pn-prop-origin}), which in turn allows us to
treat arbitrary balls.

\begin{proof}[Proof for balls with $0\in\overline{B_r}$]
Let $B_r$ be a ball with $0\in\overline{B_r}$ and $r\in(0,\diam(\Om))$, and set
$s:=nq/p\in(0,n)$. Since $\al-1=-n/p$, we have $|u|^q\Fo^{-q}=|v|^q\Fo^{-s}$, and
$(n-s)/q=\frac np\cdot\frac{p-q}q>0$. Moreover $x\cdot\nu\ge0$ on $\prt B_r$: if $c$ is the
center of $B_r$, then $x\cdot\nu=r+c\cdot\nu\ge r-|c|\ge0$. Hence Lemma~\ref{pn-lem-trace}
with $V=B_r$ gives
\begin{equation}\label{pn-loc-start}
 \Big(\frac{n-s}{q}\Big)^{\!q}\int_{B_r}\frac{|u|^q}{\Fo^{\,q}}\,\dd x
 \le N_r+\Big(\frac{n-s}{q}\Big)^{\!q-1}P_r,
\end{equation}
where
\[
 N_r:=\int_{B_r}\Fo^{\,q-s}F^q(\nabla v)\,\dd x,\qquad
 P_r:=\int_{\prt B_r}\frac{|v|^q}{\Fo^{\,s}}\,x\cdot\nu\,\dd\Hnmo .
\]
We first estimate $N_r$. Since $(q-s)p/q=p-n$, H\"older's inequality with exponents
$p/(p-q)$ and $p/q$ and \eqref{pn-B1} give, for every $D\ge\diam(\Om)$,
\begin{equation}\label{pn-Nr}
 N_r\le|B_r|^{\frac{p-q}p}\Big(\int_{B_r}\Fo^{\,p-n}F^p(\nabla v)\,\dd x\Big)^{\frac qp}
 \le C\,r^{\frac{n(p-q)}p}\big(I_{p,F}[u]\big)^{\frac qp}
 \le C\,r^{\frac{n(p-q)}p}X_1^{-\frac qp}(r/D)\,\big(I_{p,F}[u]\big)^{\frac qp},
\end{equation}
since $0<X_1\le1$. We now turn to $P_r$. Let $X:=X_1(\Fo/D)$ and $\gamma:=q/(p-q)$. Writing
the integrand of $P_r$ as $X^{-q/p}\cdot|v|^q\Fo^{-s}X^{q/p}$ and using H\"older's inequality
with exponents $p/(p-q)$ and $p/q$ with respect to the measure $x\cdot\nu\,\dd\Hnmo\ge0$,
together with $sp/q=n$, we get $P_r\le S_r^{(p-q)/p}\,T_r^{q/p}$, where
\[
 S_r:=\int_{\prt B_r}X^{-\gamma}\,x\cdot\nu\,\dd\Hnmo,\qquad
 T_r:=\int_{\prt B_r}\frac{|v|^p}{\Fo^{\,n}}\,X\,x\cdot\nu\,\dd\Hnmo .
\]
Both boundary integrals are converted into volume integrals by the divergence theorem,
applied to suitable vector fields: $S_r$ is a purely geometric quantity, whose growth in
$r$ is governed by Lemma~\ref{lem-X}, while $T_r$ is where the $X_1^2$-improved Hardy
inequality \eqref{pn-X2bis} enters.
To estimate $S_r$ we observe that, by $\frac{\dd}{\dd t}X_1=X_1^2/t$ and Euler's identity,
\[
 \nabla X^{-\gamma}\cdot x=-\gamma X^{-\gamma-1}\,\frac{X^2}{\Fo/D}\,\frac{\nabla\Fo\cdot x}{D}
 =-\gamma X^{1-\gamma},
\]
so that
\[
 \Div\{X^{-\gamma}x\}=nX^{-\gamma}-\gamma X^{1-\gamma}\le nX^{-\gamma}.
\]
The field $X^{-\gamma}x$ is continuous up to the origin, with value $0$ there, and its
divergence is integrable. Moreover $\Fo\le2\Lambda_\circ\diam(\Om)\le D$ on $B_r$, since
$0\in\overline{B_r}$, and $t\mapsto X_1^{-\gamma}(t/D)$ is decreasing. Hence the divergence
theorem, \eqref{rearr} with $|\uw_{\kappa_Fr}|=|B_r|$, Lemma~\ref{lem-X}(i) with $\al_0=n-1$ and
$\beta=\gamma$, and Lemma~\ref{lem-X}(ii) with $\kappa=\kappa_F$ give
\begin{equation}\label{pn-Sr}
 S_r\le n\int_{B_r}X^{-\gamma}\,\dd x\le n^2\omega_{n,F}\int_0^{\kappa_Fr}t^{n-1}X_1^{-\gamma}(t/D)\,\dd t
 \le C\,r^nX_1^{-\gamma}(r/D),\qquad \kappa_F:=\Big(\frac{\omega_n}{\omega_{n,F}}\Big)^{1/n},
\end{equation}
for every $D\ge e^\eta\diam(\Om)$. Finally, we estimate $T_r$. Since $v$ vanishes near the
origin, the field $|v|^p\Fo^{-n}X\,x$ is Lipschitz on $\overline{B_r}$; since
$\Div\{x/\Fo^{\,n}\}=0$ and $\nabla X\cdot x=X^2$ (the case $\gamma=-1$ of the computation
above), the divergence theorem gives
\[
 T_r=\int_{B_r}\frac{|v|^p}{\Fo^{\,n}}\,X^2\,\dd x
 +\int_{B_r}\frac{X}{\Fo^{\,n}}\;x\cdot\nabla\big(|v|^p\big)\,\dd x .
\]
By \eqref{Cauchy Schwarz}, $|x\cdot\nabla(|v|^p)|\le p|v|^{p-1}F(\nabla v)\Fo$ a.e.; splitting
the resulting integrand as
\[
 \Big[\Fo^{\frac{2-n}2}|v|^{\frac{p-2}2}F(\nabla v)\Big]\Big[|v|^{\frac p2}\Fo^{-\frac n2}X\Big],
\]
using the Cauchy--Schwarz inequality and enlarging $B_r$ to $\Om$, all the integrands being
nonnegative, we obtain
\[
 T_r\le\int_\Om\frac{|v|^p}{\Fo^{\,n}}X^2\,\dd x
 +p\Big(\int_\Om\Fo^{\,2-n}|v|^{p-2}F^2(\nabla v)\,\dd x\Big)^{\frac12}
 \Big(\int_\Om\frac{|v|^p}{\Fo^{\,n}}X^2\,\dd x\Big)^{\frac12}.
\]
Therefore \eqref{pn-B2bis} and \eqref{pn-X2bis} give
\begin{equation}\label{pn-Tr}
 T_r\le\frac{2p^2}{c_F\,\al^{p-2}}\,I_{p,F}[u],\qquad D\ge R_\Om .
\end{equation}
Combining \eqref{pn-Sr} and \eqref{pn-Tr}, we get
\[
 P_r\le C\,r^{\frac{n(p-q)}p}X_1^{-\frac qp}(r/D)\,\big(I_{p,F}[u]\big)^{\frac qp},
\]
which, together with \eqref{pn-Nr}
and \eqref{pn-loc-start}, yields
\[
 \int_{B_r}\frac{|u|^q}{\Fo^{\,q}}\,\dd x\le C\,r^{\frac{n(p-q)}p}\,X_1^{-\frac qp}(r/D)\,
 \big(I_{p,F}[u]\big)^{\frac qp},\qquad D\ge e^{\Theta}\diam(\Om).
\]
Dividing by $|B_r|=\omega_nr^n$ and raising to the power $1/q$ we obtain \eqref{pn-local}.
\end{proof}

We point out that, for balls whose closure contains the origin, the above proof uses only
$q<p$, $p>2$ and the estimates of the ground-state reduction; the hypothesis $p>n$ enters
only in the second stage, through the following version of \eqref{pn-HM} with one
point of the H\"older quotient at the origin, which is the key step as in \cite[\S5]{Ps}.

\begin{proposition}\label{pn-prop-origin}
There exist $\Theta\ge0$ and $C>0$ such that, for all $u\in\test(\Om\setminus\{0\})$ and
$D\ge e^{\Theta}\diam_F(\Om)$,
\begin{equation}\label{pn-origin}
 \sup_{x\in\Om}\bigg\{\frac{|u(x)|}{\Fo(x)^{\,\al}}\,
 X_1^{1/p}\Big(\frac{\Fo(x)}{D}\Big)\bigg\}\le C\,\big(I_{p,F}[u]\big)^{1/p} .
\end{equation}
\end{proposition}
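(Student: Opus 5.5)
Fix $x\in\Om$, $x\neq0$, and set $r:=\Fo(x)$. The plan is to compare $u(x)$ with the average of $u$ over a Euclidean ball that contains both $x$ and the origin in its closure, so that the special case of Theorem~\ref{pn-thm-local} already proved applies. Let $B$ be the Euclidean ball centred at $x$ with radius $\rho:=|x|$; then $0\in\prt B$ and $\rho\le\lambda_\circ^{-1}r$. Since $u$ vanishes near the origin and $\Om$ may be assumed to lie in a larger ball (extend $u$ by zero), $\rho$ is comparable to $r$ and $\rho<\diam(\Om)$ up to harmless constants (if necessary we use the ball of radius $\rho/2$, or enlarge $D$). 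Write
\[
 |u(x)|\le|u(x)-u_B|+|u_B| ,
\]
and treat the two terms separately.

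\emph{The mean.} By H\"older with $q=1$ and $\Fo\le C\rho$ on $B$,
\[
 |u_B|\le\frac{1}{|B|}\int_B|u|\,\dd z\le C\rho\,\frac1{|B|}\int_B\frac{|u|}{\Fo}\,\dd z
 \le C\rho^{1-n/p}X_1^{-1/p}(\rho/D)\big(I_{p,F}[u]\big)^{1/p},
\]
using \eqref{pn-local} for balls with $0\in\overline B$. Since $1-n/p=\al$, and by Lemma~\ref{lem-X}(ii) $X_1(\rho/D)$ and $X_1(r/D)$ are comparable (for $D$ large relative to $\diam_F(\Om)$), this term gives the bound $Cr^\al X_1^{-1/p}(r/D)I^{1/p}$, as desired.

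\emph{The oscillation.} By Lemma~\ref{pn-lem-rep} and \eqref{pn-F-split},
\[
 |u(x)-u_B|\le C\int_B\frac{\Fo^\al(z)F(\nabla v(z))}{|z-x|^{n-1}}\,\dd z+C\int_B\frac{|u(z)|/\Fo(z)}{|z-x|^{n-1}}\,\dd z ,
\]
where we rewrote $\Fo^{-n/p}|v|=|u|/\Fo$. For the first integral, $\Fo^\al=\Fo^{(p-n)/p}$, so \eqref{pn-kernel} and \eqref{pn-B1} give $C\rho^\al I^{1/p}\le C\rho^\al X_1^{-1/p}(\rho/D)I^{1/p}$. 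The second integral is the main obstacle. Plain H\"older with exponent $p$ would require $\int_B|u|^p\Fo^{-p}$, which \eqref{pn-local} does not allow since it needs $q<p$. So I take $q\in(n,p)$ and use H\"older with exponents $q,q'$: since $q>n$, $|z-x|^{(1-n)q'}$ is integrable and
\[
 \int_B\frac{g}{|z-x|^{n-1}}\le C\rho^{1-n/q}\|g\|_{L^q(B)}=C\rho\Big(\frac1{|B|}\int_B g^q\Big)^{1/q}.
\]
Applying \eqref{pn-local} with this $q$ to $g=|u|/\Fo$ yields again $C\rho\cdot\rho^{-n/p}X_1^{-1/p}(\rho/D)I^{1/p}$.

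Summing up, we get $|u(x)|\le Cr^\al X_1^{-1/p}(r/D)I^{1/p}$, which is \eqref{pn-origin}, with $\Theta$ collecting the conditions on $D$ needed in Theorem~\ref{pn-thm-local} and in Lemma~\ref{lem-X}(ii). The only delicate bookkeeping is the comparability of $\rho$, $r$ and $\diam(\Om)$ versus $\diam_F(\Om)$, all within constants depending on $F$.
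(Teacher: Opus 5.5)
Your argument is correct and is essentially the paper's proof: the representation formula of Lemma~\ref{pn-lem-rep} together with \eqref{pn-F-split}, the gradient term bounded through \eqref{pn-kernel} and \eqref{pn-B1}, and the $|u|/\Fo$ term bounded by H\"older with an exponent $q\in(n,p)$ combined with the already proved case of Theorem~\ref{pn-thm-local} (the paper takes $q=2np/(n+p)$). The differences are minor. You use the ball $B_{|x|}(x)$ instead of $B_{|x|}(x/2)$, and you bound $|u_B|$ directly by the $q=1$ case of \eqref{pn-local} together with $\Fo\le C|x|$ on $B$, whereas the paper evaluates the oscillation estimate at points near the origin where $u=0$. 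Also, $|x|<\diam(\Om)$ holds automatically because $0,x\in\Om$, so no shrinking of the ball is needed; shrinking it would in fact lose $0\in\overline B$.
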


\begin{proof}
The idea is to estimate the oscillation of $u$ on a ball containing the origin by means
of the representation formula of Lemma~\ref{pn-lem-rep}, controlling the term involving
$|u|/\Fo$ through the case of Theorem~\ref{pn-thm-local} already proved; since $u$
vanishes near the origin, the oscillation controls $u$ itself.
Let $B_r$ be a ball containing the origin, with $r\in(0,\diam(\Om))$, and let $x\in B_r$.
By \eqref{pn-rep-ball} and \eqref{pn-F-split}, and since $|v|\Fo^{-n/p}=|u|/\Fo$,
\begin{equation}\label{pn-origin-split}
 \frac{n\omega_n}{2^n\Lambda_\circ}\,|u(x)-u_{B_r}|\le K_r(x)+\al L_r(x),
\end{equation}
where
\[
 K_r(x):=\int_{B_r}\frac{\Fo^{\,\al}F(\nabla v)}{|x-z|^{n-1}}\,\dd z,\qquad
 L_r(x):=\int_{B_r}\frac{|u|/\Fo}{|x-z|^{n-1}}\,\dd z.
\]
Since $\Fo^{\al p}=\Fo^{\,p-n}$, \eqref{pn-kernel} and \eqref{pn-B1} give
\[
 K_r(x)\le C\,r^{\al}\big(I_{p,F}[u]\big)^{1/p}\le C\,r^{\al}X_1^{-1/p}(r/D)\big(I_{p,F}[u]\big)^{1/p}.
\]
For $L_r$,
let $\e:=(p-n)/(2n)$ and $q:=p/(1+\e)\in[1,p)$. H\"older's inequality with exponents
$p/(p-1-\e)$ and $q$ gives
\[
 L_r(x)\le\Big(\int_{B_{2r}(x)}\frac{\dd z}{|x-z|^{\frac{(n-1)p}{p-1-\e}}}\Big)^{\frac{p-1-\e}p}
 \Big(\int_{B_r}\Big(\frac{|u|}{\Fo}\Big)^{q}\dd z\Big)^{\frac1q}.
\]
The kernel integral is finite, since $\e<(p-n)/n$, and equals
$C\,r^{(p-n-n\e)/(p-1-\e)}$, so that the first factor is $C\,r^{\al-n\e/p}$. By the case of
Theorem~\ref{pn-thm-local} already proved, and since $n/q-n/p=n\e/p$, the second factor is
bounded by $C\,r^{n\e/p}X_1^{-1/p}(r/D)\big(I_{p,F}[u]\big)^{1/p}$. Therefore
\begin{equation}\label{pn-osc}
 |u(x)-u_{B_r}|\le C\,r^{\al}X_1^{-1/p}(r/D)\,\big(I_{p,F}[u]\big)^{1/p}\qquad\text{for all }x\in B_r .
\end{equation}
Since $u$ vanishes in a neighbourhood of $0\in B_r$, evaluating \eqref{pn-osc} at points
close to the origin gives the same bound for $|u_{B_r}|$ and hence, by the triangle
inequality, for $|u(x)|$, $x\in B_r$. Given $x\in\Om$, we choose $B_r=B_{|x|}(x/2)$, which
contains $0$ and $x$ and has radius $|x|<\diam(\Om)$, and we get
\[
 |u(x)|\le C\,|x|^{\al}X_1^{-1/p}(|x|/D)\big(I_{p,F}[u]\big)^{1/p}.
\]
Finally, since
$\lambda_\circ|x|\le\Fo(x)\le\Lambda_\circ|x|$ and $\diam(\Om)\le\lambda_\circ^{-1}\diam_F(\Om)$,
Lemma~\ref{lem-X}(ii) converts $|x|$ into $\Fo(x)$ in both factors, at the cost of enlarging
$D$. This concludes the proof.
\end{proof}

\begin{proof}[Completion of the proof of Theorem~\ref{pn-thm-local}]
Let now $B_r$ be an arbitrary ball, with $r\in(0,\diam(\Om))$. By
Proposition~\ref{pn-prop-origin},
\[
 \int_{B_r}\frac{|u|^q}{\Fo^{\,q}}\,\dd x\le C\,\big(I_{p,F}[u]\big)^{\frac qp}\int_{B_r\cap\Om}
 \Fo^{-\frac{nq}p}X_1^{-\frac qp}\Big(\frac\Fo D\Big)\,\dd x .
\]
The last integrand is a decreasing function of $\Fo$; hence, enlarging $B_r\cap\Om$ to
$B_r$, \eqref{rearr},
Lemma~\ref{lem-X}(i) with $\al_0=n-1-nq/p>-1$ and $\beta=q/p$, and Lemma~\ref{lem-X}(ii) give
\[
 \int_{B_r\cap\Om}\Fo^{-\frac{nq}p}X_1^{-\frac qp}\Big(\frac\Fo D\Big)\,\dd x
 \le n\,\omega_{n,F}\int_0^{\kappa_Fr}t^{\,n-1-\frac{nq}p}X_1^{-\frac qp}(t/D)\,\dd t
 \le C\,r^{\,n-\frac{nq}p}X_1^{-\frac qp}(r/D),
\]
for $D\ge e^\eta\diam(\Om)$, and \eqref{pn-local} follows as before.
\end{proof}

\begin{proof}[Proof of Theorem~\ref{pn-thm-main}]
Let $x,y\in\Om$, $x\neq y$, set $r:=|x-y|<\diam(\Om)$ and let $B_r$ be the ball of radius $r$
centered at $(x+y)/2$, so that $x,y\in B_r$. Then
\begin{equation}\label{pn-two-osc}
 |u(x)-u(y)|\le|u(x)-u_{B_r}|+|u(y)-u_{B_r}|,
\end{equation}
and it is enough to bound $|u(x)-u_{B_r}|$ by a quantity which does not depend on the
position of $x$ in $B_r$. As in \eqref{pn-origin-split},
$|u(x)-u_{B_r}|\le C(K_r(x)+L_r(x))$, and $K_r(x)\le Cr^{\al}\big(I_{p,F}[u]\big)^{1/p}$ as before.
This time $B_r$ need not contain the origin, and we estimate $L_r$ directly through
\eqref{pn-origin}, which gives
\[
 \frac{|u(z)|}{\Fo(z)}\le C\,\big(I_{p,F}[u]\big)^{1/p}\,\Fo(z)^{-n/p}\,X_1^{-1/p}\Big(\frac{\Fo(z)}D\Big).
\]
Let $Q:=(n+p)/2\in(n,p)$ and $Q':=Q/(Q-1)$. By H\"older's inequality,
\begin{equation*}
 L_r(x)\le C\,\big(I_{p,F}[u]\big)^{1/p}
 \Big(\int_{B_r\cap\Om}\Fo^{-\frac{nQ}p}X_1^{-\frac Qp}\Big(\frac\Fo D\Big)\,\dd z\Big)^{\frac1Q}
 \times\Big(\int_{B_{2r}(x)}\frac{\dd z}{|x-z|^{(n-1)Q'}}\Big)^{\frac1{Q'}}.
\end{equation*}
Both integrals are finite, since $nQ/p<n$ and $(n-1)Q'<n$. As in the completion of the
proof of Theorem~\ref{pn-thm-local}, now with $\al_0=n-1-nQ/p>-1$ and $\beta=Q/p$, the first
factor is bounded by $C\,r^{\frac nQ-\frac np}X_1^{-1/p}(r/D)$, while the second one equals
$C\,r^{\frac n{Q'}-n+1}$. Since $\frac nQ-\frac np+\frac n{Q'}-n+1=1-\frac np=\al$, we obtain
\[
 L_r(x)\le C\,r^{\al}X_1^{-1/p}(r/D)\big(I_{p,F}[u]\big)^{1/p}.
\]
The same bound holds at $y$, and
\eqref{pn-two-osc} gives
\[
 |u(x)-u(y)|\le C\,|x-y|^{\al}X_1^{-1/p}\big(|x-y|/D\big)\,\big(I_{p,F}[u]\big)^{1/p}.
\]
Converting $|x-y|$ into $\Fo(x-y)$ through \eqref{ellip} and Lemma~\ref{lem-X}(ii), both in
the power $\al$ and inside $X_1^{-1/p}$, we obtain \eqref{pn-HM}, with $D=B\diam_F(\Om)$ and
$B=e^\Theta$.

It remains to show that the exponent $1/p$ cannot be decreased. Assume on the contrary that there exists $\e\in(0,1]$ such that for all $u\in\test(\Om\setminus\{0\})$ we have
\[
 \sup_{\substack{x,y\in\Om\\ x\ne y}}
 \left\{
 \frac{|u(x)-u(y)|}{\Fo^{\,\al}(x-y)}
 X_1^{(1-\e)/p}\!
 \Big(\frac{\Fo(x-y)}{D}\Big)
 \right\}
 \le
 C\,\big(I_{p,F}[u]\big)^{1/p},
\]
for some constants $C>0$ and $D\geq\diam_F(\Om)$. Choosing $y=0$ we obtain
\[
 \frac{|u(x)|}{\Fo^{\,\al}(x)}
 X_1^{(1-\e)/p}\!
 \Big(\frac{\Fo(x)}{D}\Big)
 \le
 C\,\big(I_{p,F}[u]\big)^{1/p}
 \quad\forall~x\in\Om\setminus\{0\}.
\]
This readily implies that
\[
  \int_{\Om}\frac{|u|^p}{\Fo^{\,p}}X_1^{2-\e/2}\Big(\frac{\Fo}{D}\Big)~\dd x
   \leq
    C^p\,I_{p,F}[u]\int_{\Om}\Fo^{\,-n}X_1^{1+\e/2}\Big(\frac{\Fo}{D}\Big)~\dd x.
\]
Clearly, since $\e>0$ the integral on the right is a finite constant independent of $u$. Thus \eqref{X2-sharp} would hold with $\gamma=2-\e/2<2$, contradicting the optimality part of Theorem~\ref{pn-thm-X2}. Theorem~\ref{pn-thm-main} is proved.
\end{proof}

\subsection*{Acknowledgements}
G.~di Blasio and G.~Pisante are members of the Gruppo Nazionale per l'Analisi Matematica, la Probabilit\`a e le loro Applicazioni (GNAMPA) of the Istituto Nazionale di Alta Matematica (INdAM), whose support is gratefully acknowledged. Part of this work was carried out while G.~Psaradakis was Visiting Professor at the Department of Mathematics and Physics of the Universit\`a degli Studi della Campania ``Luigi Vanvitelli'' (D.R. 709/2025), whose hospitality and support are gratefully acknowledged.

\subsection*{Disclosure on AI assistance}
The authors used AI-based tools for literature searches,
checking computations and references, and language editing. All results, proofs
and the final text were developed by the authors, who take full responsibility
for the content of the paper.

\bigskip
{\footnotesize
\noindent (G.~di Blasio) \textsc{Dipartimento di Matematica e Fisica, Universit\`a
degli Studi della Campania ``L.~Vanvitelli'', Viale Lincoln 5, 81100 Caserta,
Italy}\par
\noindent\textit{Email address:} \texttt{giuseppina.diblasio@unicampania.it}

\medskip
\noindent (G.~Pisante) \textsc{Dipartimento di Matematica e Fisica, Universit\`a
degli Studi della Campania ``L.~Vanvitelli'', Viale Lincoln 5, 81100 Caserta,
Italy}\par
\noindent\textit{Email address:} \texttt{giovanni.pisante@unicampania.it}

\medskip
\noindent (G.~Psaradakis) \textsc{Department of Mathematics, University of Western
Macedonia, 52100 Kastoria, Greece}\par
\noindent\textit{Email address:} \texttt{gpsaradakis@uowm.gr}
}
\end{document}